\newtheorem{theorem}{Theorem}[section]
\newtheorem{definition}{Definition}[section]
\newtheorem{lemma}[theorem]{Lemma}
\newtheorem{rem}[theorem]{Remark}
\numberwithin{equation}{section}
  \def \G{\Gamma}
\newcommand\norm[1]{\lVert#1\rVert}
\newcommand\RR{\ensuremath{\mathbb{R}}}
\def\tint{\text{Int}}
\begin{document}
\title{Semiflows strongly focusing monotone with respect to high-rank cones. I: Generic dynamics}

\author{Lirui Feng \thanks{School of Mathematical Science, University of Science and Technology of China, Hefei, Anhui, 230026, Peoples Republic of China (ruilif@ustc.edu.cn). The author was supported by NSF of China No.12101583 and the Fundamental Research Funds of the Central Universities through grant WK0010000067.}}

\date{}
\maketitle

\begin{abstract}
We consider a smooth semiflow strongly focusing monotone with respect to a cone of rank $k$ on a Banach space.
We obtain its generic dynamics, that is, semiorbits with initial data from an open and dense subset of any bounded open set are either pseudo-ordered or convergent to an equilibrium. For the case $k=1$, it is the celebrated Hirsch's Generic Convergence Theorem. For the case $k=2$, we obtain the generic Poincar\'{e}-Bendixson Theorem.
\end{abstract}

\section{Introduction}

We investigate the generic dynamics of semiflows strongly focusing monotone with respect to a cone $C$ of rank $k$ on an infinite dimensional Banach space $X$. Roughly speaking, a cone of rank $k$ (abbr. $k$-cone) is a closed subset of $X$ containing a subspace of dimension $k$ but no subspace of higher dimension, which is independently introduced by Krasnosel'skij, Lifshits and Sobolev \cite{KLS} to do some functional studies, and by Fusco and Oliva \cite{FO1} to obtain a Perron theorem in the finite dimensional case. A convex cone $K$ gives rise to a 1-cone, $K\cup (-K)$. Therefore, the class of semiflows strongly monotone with respect to $k$-cones includes the classical monotone semiflows originating from the groundbreaking works of Hirsch (see \cite{Hir1,Hir2,Hir3,Hir33,Hir4,Hir5,Hir6}). Due to the lack of convexity for $k (k\geqslant 2)$-cones, it is a very challenging task to study the behaviors of this general class of systems strongly monotone with respect to $k$-cones. Despite some progress are achieved (see \cite{F-W-W1,F-W-W2,F-W-W3,F-W-W4}), their dynamics are far from being understood. To promote the development of systems monotone with respect to $k$-cones, we introduce the concept of semiflows strongly focusing monotone with respect to a $k$-cone $C$ and devote to describe their typical behaviors of almost all semiorbits in this paper.

A strongly focusing operator with respect to a $k$-cone $C$ originates from Krasnosel'skij et.al \cite{KLS} to prove a Krein-Rutman type theorem with respect to $k$-cones for a single operator, and also from Lian and Wang \cite{LW2} to investigate the relationship between Multiplicity Ergodic theorem and Krein-Rutman type theorem for random linear dynamical systems. Roughly speaking, the image of $C$ for a strongly focusing operator is a subset of $C$ such that its containing unit vectors are uniformly separated from the boundary of $C$  (see also Definition \ref{postive,strongly focusing}(ii)). We should point out that any strongly positive operator $R$ with respect to $C$ (i.e., $R C\subset \text{Int} C$) on a finite dimensional space is strongly focusing. Therefore, the smooth semiflows strongly focusing monotone with respect to $k$-cones on an infinite dimensional space is a kind of natural extension of the smooth flows with respect to $k$-cones on a finite dimensional space (refer to the flows in \cite{F-W-W2}). More precisely, this class of semiflows strongly monotone with respect to $C$ satisfies that for each compact invariant set $\Sigma$, one can find constants $\delta,T,\kappa>0$ such that there is a strongly focusing operator $T_{x,y}$ with separation index greater than $\kappa$ such that $T_{x,y}(y-x)=\Phi_T(y)-\Phi_T(x)$ for any $z\in \Sigma$ and $x,y\in B_{\delta}(z)$ (the ball centred at $z$ with radius $\delta$) (see also Definition \ref{strongly focusing monotone}(ii)). This class of semiflows would have significant potential applications to the study of dynamics of nonlinear evolution equations.

There are two types of semiorbits for the semiflow $\Phi_t$ monotone with respect to a $k$-cone $C$ (or a convex cone $K$): pseudo-ordered semiorbits and unordered semiorbits. A (positive) semiorbit $O^+(x)=\{\Phi_t(x):\,t\geq 0\}$ is pseudo-ordered if it contains a pair of different ordered points $\Phi_{\tau}(x)$ and $\Phi_s(x)$, i.e., $\Phi_{\tau}(x)-\Phi_{s}(x)\in C\setminus\{0\}$ (or $\Phi_{\tau}(x)-\Phi_{s}(x)\in K\setminus\{0\}$); otherwise, it is called unordered.

For the classical (in the sense of Hirsch's) monotone systems, the order reduced by a convex cone is a partial order relationship. Base on this fact, Monotone Convergence Criterion, a cornstone result in Hirsch's theory, can be established. It is to say that every pseudo-ordered precompact semiorbit converges to an equilibrium. The partial order also plays an important role in the further developments from Monotone Convergence Criterion, that includes Nonordering of Limit Sets and Limit Set Dichotomy. These results become building blocks (see \cite[Theorem 2.1, p.491]{Smi17}) for  establishing Hirsch's Generic Convergence Theorem.

Compared with classical monotone systems, the order reduced by $k$-cones is a symmetric relationship, that causes the structure of the omega-limit set $\omega(x)$ of a pseudo-ordered semiorbit $O^+(x)$ is more complicated and requires new techniques to analyze dynamics of the semiflows monotone with high-rank cones. Sanchez \cite{San09} firstly treated the problem on the structure of the omega-limit set $\omega(x)$ of a pseudo-ordered orbit for flows on $\mathbb{R}^n$ strongly monotone with respect to a $k(k\geq 2)$-cone $C$. He used the $C^1$-closed lemma to prove that any orbit in $\omega(x)$ of a pseudo-ordered orbit is ordered with respect to $C$; and further obtain a Poincar\'{e}-Bendixson theorem, that is, the omega-limit set $\omega(x)$ of a pseudo-ordered orbit containing no equilibrium is a closed orbit. For the total-ordering property of the entire set $\omega(x)$, he \cite[p.1984]{San09} posed it as an open problem. In our previous work \cite{F-W-W1}, we creatively utilized topological properties of continuous semiflows to study the total-ordering property for continuous semiflows strongly monotone with respect to a $k$-cone in a general Banach space and obtained the Order-Trichotomy (see \cite[Theorem B]{F-W-W1}) for the omega-limit set $\omega(x)$ of a pseudo-ordered orbit. More precisely, we proved that either (a) $\omega(x)$ is ordered; or (b) $\omega(x)$ belongs to the set of equilibria; or otherwise, (c) $\omega(x)$ possesses a certain ordered homoclinic property. In our previous work \cite[Theorem A and C]{F-W-W1}, we extended Sanchez's results to semiflows only continuous on an infinite dimensional space.

For semiflows strongly monotone with respect to higher rank cones, the symmetry of the order and the complexity of an omega-limit set $\Omega$ make it difficult to reappear the building blocks in Hirsch's theory. To treat the generic dynamics of flows $\Phi_t$ strongly monotone with respect to a $k$-cone $C$ on $\mathbb{R}^n$, we turned to analyze the local dynamics for each type of omega-limit set $\Omega$ in our previous works \cite{F-W-W2}, where the types are classified by our approach of smooth ergodic arguments. More precisely, the linear skew-product flow $(\Phi_t, D\Phi_t)$ admits $k$-exponential separation along $\Omega$ associated with $C$ provided strongly positivity of $D_x\Phi_t$ for any $x\in \Omega$ and $t>0$. Roughly speaking, this property describes that there exist $k$-dimensional invariant subbundle $\Omega\times (E_x)$ and $k$-codimensional invariant subbundle $\Omega\times (F_x)$ with respect to $(\Phi_t, D\Phi_t)$ such that $\Omega\times \mathbb{R}^n=\Omega\times (E_x)\oplus \Omega\times (F_x)$; and more, the action of $(\Phi_t, D\Phi_t)$ on $\Omega\times (E_x)$ dominates the one on $\Omega\times (F_x)$ as $t\rightarrow \infty$ (see Definition \ref{D:ES-separation} or its versions for random dynamics in \cite{LW1,LW2}). The related crucial tool is the $k$-Lyapunov exponent $\lambda_{kx}$ of $x\in \Omega$ (defined as $\lambda_{kx}=\limsup\limits_{t\rightarrow+\infty}\frac{\log m(D_x\Phi_t|_{E_x})}{t}$, see also Definition \ref{Lyapunov expnents} and (\ref{infimum norm})), which describes the action's growth rate of $(\Phi_t, D\Phi_t)$ on the $k$-dimensional subbundle $\Omega\times (E_x)$. Multiplicity Ergodic theorem ensures \cite{Jo-Pa-Se,LL,Mane,Ose,Vi} that $\lambda_{kx}$ is actually the limit for ??most?? points $x\in \Omega$; such points for which $\lambda_{kx}$ is the limit are said to be regular and other points are said to be irregular. According to the sign of the $k$-Lyapunov exponents of the regular/irregular points on any given $\Omega$, three are three types of a given omega-limit set $\Omega$: (i) $\lambda_{kx}>0$ for any point $x$ in $\Omega$; (ii) $\lambda_{kx}>0$ for any regular point $x\in \Omega$ and $\lambda_{kz}\leq 0$ for some irregular point $z\in \Omega$; (iii) $\lambda_{kx}\leq 0$ for some regular point $x\in \Omega$. By discussing local behaviors for each type of omega-limit sets, we obtain the finite dimensional version of Generic dynamics theorem (see \cite[Theorem A]{F-W-W2}); and further combinate it with the Poincar\'{e}-Bendixson theorem (see Lemma \ref{P-B thm} and also \cite[Theorem C]{F-W-W1}) of the omega-limit set of a pseudo-ordered semiorbit to get the generic Poincar\'{e}-Bendixson theorem (see \cite[Theorem B]{F-W-W2}) on $\mathbb{R}^n$.

The purpose of this paper is to investigate the infinite dimensional version of generic dynamics of semiflow $\Phi_t$ strongly focusing monotone with respect to a $k$-cone $C$ on a Banach space. We prove that

$\bullet$ For generic (open and dense) semiorbits are pseudo-ordered or converge to an equilibrium.

$\bullet$ Whenever $k=2$, for generic points, the omega-limit set containing no equilibrium is a periodic orbit.

\noindent By the strongly positivity of $D_x\Phi_t$ in Definition \ref{strongly focusing monotone}(i), the linear skew-product semiflow $(\Phi_t,D\Phi_t)$ admits $k$-exponential separation $\Sigma\times X=\Sigma\times(E_x)\oplus \Sigma\times(F_x)$ along each invariant set $\Sigma$ associated with $C$. However, the unit ball in an infinite dimensional Banach space is lack of compactness; and hence, unit vectors in the $k$-codimensional invariant subbundle $\Sigma\times (F_x)$ with respect to $(\Phi_t,D\Phi_t)$ are not uniformly far away from the boundary $\partial C$ of the $k$-cone $C$. The method in \cite{F-W-W2} is not effective to estimate the proportion of the projection onto the invariant bundles $\Sigma\times(E_x)$ and $\Sigma\times(F_x)$ for a nonzero vector $v\in C$ in an infinite dimensional space. We turn to estimate the proportion of the projection onto the invariant bundles $\Sigma\times(E_x)$ and $\Sigma\times(F_x)$ for the difference $\Phi_t(x)-\Phi_t(y)$ of a pair of ordered points $\Phi_t(x)$ and $\Phi_t(y)$ by utilizing the strongly focusing condition in Definition \ref{strongly focusing monotone}(ii). By this novel approach, we analyze the local dynamical features for each type of omega-limit sets and furthermore deduce the infinite dimensional version of generic dynamics. For case $k=2$, the generic Poincar\'{e}-Bendixson theorem are also obtained.

The paper is organized as follows. In section 2, we give some notations and summarize the preliminary results. In section 3, we present the main results on the infinite dimensional version of Generic dynamics and generic Poincar\'{e}-Bendixson theorem for semiflows strongly focusing monotone with respect to a $k$-cone. In section 4, we discuss the local behaviors for each type of omega-limit sets. In section 5, we prove our main results.

\section{Notations and Preliminary Results}

In this section, we give some preliminary knowledge to be used in later sections. We start with basic nations and definitions on semiflows strongly focusing monotone with respect to higher rank cones. We then introduce the $k$-exponential separation and $k$-Lyapunov exponents with some crucial properties of them.
\subsection{Semiflows strongly monotone with respect to higher rank cones}

Let $(X,\norm{\cdot})$ be a Banach space equipped with a norm
$\norm{\cdot}$. A {\it semiflow} on $X$ is a continuous map $\Phi:\mathbb{R}^+\times X\to X$ with $\Phi_0={\rm Id}$ and
$\Phi_t\circ\Phi_s=\Phi_{t+s}$ for $t,s\ge 0$. Here, $\mathbb{R}^+=[0,+\infty)$, $\Phi_t(\cdot)=\Phi(t,\cdot)$ for $t\ge 0$, and ${\rm Id}$
is the identity map on $X$. A semiflow $\Phi_t$ on $X$ is called {\it $C^{1,\alpha}$-smooth} if $\Phi|_{\mathbb{R}^+\times X}$ is a
$C^{1,\alpha}$-map (a $C^1$-map with a locally $\alpha$-H\"{o}lder derivative) with $\alpha\in (0,1]$. The derivatives of $\Phi_t$ with respect to
$x$, at $(t,x)$, is denoted by $D_x\Phi_t(x)$.

Let $x\in X$,  the {\it positive semiorbit of $x$} is denoted by $O^+(x)=\{\Phi_t(x):t\ge 0\}$. A {\it negative semiorbit}
(resp. {\it full-orbit}) of $x$ is a continuous function $\psi:\mathbb{R}^-=\{t\in \mathbb{R}|t\le 0\}\to X$ (resp. $\psi:\mathbb{R}\to X$) such
that $\psi(0)=x$ and, for any $s\le 0$ (resp. $s\in \mathbb{R}$), $\Phi_t(\psi(s))=\psi(t+s)$ holds for $0\le t\le -s$ (resp. $0\le t$). Clearly,
if $\psi$ is a negative semiorbit of $x$, then $\psi$ can be extended to a full orbit
\begin{equation}\label{E:full-orbit}
\tilde{\psi}(t)=\left\{\begin{split} & \psi(t), \,\,\,\,\,t\le 0,\\ &\Phi_t(x),\,\, t\ge 0.\end{split}\right.
\end{equation}
On the other hand, any full orbit of $x$ when restricted to $\mathbb{R}^-$ is a negative semiorbit of $x$. Since $\Phi_t$ is just a semiflow,
a negative semi-orbit of $x$ may not exist, and it is not necessary to be unique even if one exists.

An {\it equilibrium} (also called {\it a trivial orbit}) is a point $x$ for which $O^+(x)=\{x\}$. Let $E$ be the set of all equilibria of $\Phi_t$.
A nontrivial semiorbit $O^+(x)$ is said to be a {\it$T$-periodic orbit} for some $T>0$ if $\Phi_T(x)=x$.

A subset $\Sigma\subset X$ is called {\it positively invariant} if $\Phi_{t}(\Sigma)\subset \Sigma$ for any $t\in \mathbb{R}^+$, and is called {\it invariant}
if $\Phi_{t}(\Sigma)=\Sigma$ for any $t\in \mathbb{R}$. Clearly, for any $x\in \Sigma$, there exists a negative semi-orbit of $x$, provided that $\Sigma$ is invariant. Let $\Sigma\subset X$ be an invariant set. $\Phi_t$ is said to {\it admit a flow extension on $\Sigma$}, if there is a flow $\tilde{\Phi}_t$ such that $\tilde{\Phi}_t(x)=\Phi_t(x)$ for any $x\in \Sigma$ and $t\ge 0.$

The {\it$\omega$-limit set} $\omega(x)$ of $x\in X$ is defined by $\omega(x)=\cap_{s\ge 0}\overline{\cup_{t\ge s}\Phi_t(x)}$. If $O^+(x)$ is
precompact, then $\omega(x)$ is nonempty, compact, connected and invariant. Given a subset $D\subset X$, {\it the positive semiorbit $O^+(D)$ of $D$} is defined as $O^+(D)=\bigcup\limits_{x\in D}O^+(x)$. A subset $D$ is called {\it $\omega$-compact} if $O^+(x)$ is precompact for each
$x\in D$ and $\bigcup\limits_{x\in D}\omega(x)$ is precompact. Clearly, $D$ is $\omega$-compact provided by the compactness of $\overline{O^+(D)}$.

A closed set $C\subset X$ is called a cone of rank-$k$ ({\it abbr. $k$-cone}) if\\
\indent {\rm i)} For any $v\in C$ and $l\in \RR,$ $lv\in C$; \\
\indent {\rm ii)}  $\max\{\dim W:C\supset W \text{ linear subspace}\}=k.$\\
\noindent Moreover, the integer $k(\ge 1)$ is called the rank of $C$. A $k$-cone $C\subset X$ is said to be {\it solid} if its interior
$\tint C\ne \emptyset$; and $C$ is called {\it $k$-solid} if there is a $k$-dimensional linear subspace $W$ such that
$W\setminus \{0\}\subset \tint C$. Given a $k$-cone $C\subset X$, we say that $C$ is {\it complemented} if there exists a $k$-codimensional space
$H^{c}\subset X$ such that $H^{c}\cap C=\{0\}$. For two points $x,y\in X$, we call that {\it $x$ and $y$ are ordered, denoted by $x\thicksim y$}, if $x-y\in C$. Otherwise, $x,y$ are called to be {\it unordered}. The pair $x,y\in X$ are said to be {\it strongly ordered},
denoted by $x\thickapprox y$, if $x-y\in \tint C$.

Let $d(x,y)=\norm{x-y}$ for any $x,y\in X$ and $d(x,B)=\inf\limits_{y\in B}d(x,y)$ for any $x\in X, B\subset X$.\\

Throughout this paper, we assume $C$ is a $k$-solid complemented cone and $\Phi_t$ with compact $x$-derivartive $D_x\Phi_t$ for $(x,t)\in X\times \mathbb{R}^+$ admits a flow extension on each omega-limit set $\omega(x)$.
\begin{definition}\label{postive,strongly focusing}

{\rm(i) A linear operator $R\in L(X)$ is called {\it strongly positive with respect to $C$}, if
$R\,\big(C\setminus\{0\}\big)\subset \text{Int} C$.

(ii) A linear operator $R$ is called {\it strongly focusing with respect to $C$} if there is a $\kappa>0$ such that $$\underline{\text{dist}}(RC, X\setminus C)=\kappa,$$ where $\underline{\text{dist}}(L_1,L_2)$ is the separation index between set $L_1$ and $L_2$ defined by $$ \underline{\text{dist}}(L_1,L_2)=\inf\limits_{v\in L_1, \norm{v}=1}\{\inf\limits_{u\in L_2}\norm{v-u}\}.$$ Here, $\kappa$ is also called the separation index of $R$.}
\end{definition}
\begin{rem} {\rm (i) A strongly focusing operator is automatically a strongly positive operator.

(ii) Let $R$ be a strongly positive operator w.r.t. $C$ on $\mathbb{R}^n$. Then, $R$ is also strongly focusing w.r.t. $C$.}
\end{rem}

A semiflow $\Phi_t$ on $X$ is called {\it monotone with respect to $C$} if
$$\Phi_t(x)\thicksim\Phi_t(y),\, \text{ whenever } x\thicksim y \text{ and } t\ge 0;$$ and $\Phi_t$ is called {\it strongly monotone with respect
to $C$} if $\Phi_t$ is monotone with respect to $C$ and $$\Phi_t(x)\approx \Phi_t(y),\, \text{ whenever } x\ne y, x\thicksim y \text{ and } t>0.$$ A nontrivial positive semiorbit $O^+(x)$ is called {\it pseudo-ordered} (also called {\it of Type-I}), if there exist two distinct points
$\Phi_{t_1}(x),\Phi_{t_2}(x)$ in $O^+(x)$ such that $\Phi_{t_1}(x)\thicksim\Phi_{t_2}(x)$. Otherwise, $O^+(x)$ is called {\it unordered}
(also called {\it of Type-II}). Hereafter, we let $$Q=\{x\in X: O^+(x) \text{ is pseudo-ordered}\}.$$

\begin{definition}\label{strongly focusing monotone} {\rm A semiflow $\Phi_t$ is called {\it strongly focusing monotone with respect to $C$}, if it satisfies:

(i) It is $C^1$-smooth and strongly monotone with respect to $C$ such that the $x$-derivative $D_x\Phi_t$ of $\Phi_t \,(t>0)$ is strongly positive with respect to $C$ for any $x\in X$;

(ii) For each compact invariant set $\Sigma$ of $\Phi_t$, one can find constants $\delta,T,\kappa>0$ such that there is a strongly focusing operator $T_{x,y}$ with separation index greater than $\kappa$ such that $T_{x,y}(y-x)=\Phi_T(y)-\Phi_T(x)$ for any $z\in \Sigma$ and $x, y\in B_{\delta}(z)$, where $B_{\delta}(z)$ is the ball centred at $z$ with radius $\delta$.}
\end{definition}

\begin{rem} {\rm Let $\Phi_t$ be a $C^1$-smooth flow strongly monotone w.r.t. $C$ on $\mathbb{R}^n$, whose $x$-derivative $D_x\Phi_t$ is strongly positive w.r.t. $C$ for any $x\in\mathbb{R}^n$ and $t>0$. Then, $\Phi_t$ is strongly focusing monotone w.r.t. $C$.}

\end{rem}

\begin{rem} {\rm The strongly focusing condition in Definition \ref{strongly focusing monotone}(ii) can be relexed and only required for each omega limit set $\omega(x)$ in the proof of the results in this paper.}
\end{rem}

\begin{rem}{\rm Let $\tilde{\Sigma}=\text{Co}\{B_{\delta}(\Sigma)\times\text{Co}\{B_{\delta}(\Sigma)\}$. Here, $B_{\delta}(\Sigma)=\{v\in X: d(v,\Sigma)\leq\delta\}$ and $\text{Co}\{B_{\delta}(\Sigma)\}$ is the convex hull of $B_{\delta}(\Sigma)$. Let $T_{x,y}=\int_0^1D_{x+s(y-x)}\Phi_Tds$ for any $(x,y)\in\tilde{\Sigma}$. Then, one has $T_{x,y}(y-x)=\Phi_T(y)-\Phi_T(x)$. Let $\kappa>0$. Compared with Definition \ref{strongly focusing monotone}(ii), the following condition has more restriction. $$(*)\,\,\,\{T_{x,y}\}_{\tilde{\Sigma}}\,\,\text{is a family of strongly focusing operators with separation index greater than}\,\,\kappa.$$}
\end{rem}

Now, we give several useful results on semiflows strongly monotone with respect to $C$.

\begin{lemma}\label{co-limit}
Assume that $\Phi_t$ is strongly monotone with respect to $C$. If $x\thicksim y$ and there is a sequence $t_n\to \infty$ such that
$\Phi_{t_n}(x)\rightarrow z$ and $\Phi_{t_n}(y)\rightarrow z$, then $z\in Q$ or $z$ is an equilibrium.
\end{lemma}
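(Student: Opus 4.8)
The plan is to fix the common limit $z$, assume it is \emph{not} an equilibrium, and manufacture two distinct points on $O^+(z)$ that are ordered, so that $z\in Q$. We may assume $x\neq y$, the degenerate case $x=y$ carrying no order information. Then strong monotonicity gives $\Phi_{t}(x)\thickapprox\Phi_{t}(y)$ for every $t>0$, so the difference vectors $w_n(s):=\Phi_{t_n+s}(x)-\Phi_{t_n+s}(y)$ lie in $\tint C\setminus\{0\}$ for all large $n$ and all $s\ge 0$, while $\Phi_{t_n+s}(x)\to\Phi_s(z)$ and $\Phi_{t_n+s}(y)\to\Phi_s(z)$; in particular $w_n(s)\to 0$ for each fixed $s$. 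Since $z$ lies in the invariant set $\omega(x)$, on which $\Phi_t$ admits a flow extension, the full orbit of $z$ is well defined; moreover the same extraction applied to $\Phi_{t_n-s}(x)$ and $\Phi_{t_n-s}(y)$ shows they share the limit $\Phi_{-s}(z)$, for if the (ordered) limits were distinct points $p\thicksim q$, $p\neq q$, strong monotonicity would force $0=\Phi_s(p)-\Phi_s(q)\in\tint C$, impossible since $0\in\partial C$. This $0\in\tint C$ contradiction is the mechanism I would reuse to discard degenerate alignments below.

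The heart of the matter is that order preservation only ever compares the two orbits at equal times, where they collapse to the single point $\Phi_s(z)$; hence the nontrivial ordered pair on $O^+(z)$ must be produced as a limit of \emph{shifted} comparisons $\Phi_{t_n+a}(x)$ versus $\Phi_{t_n+b_n}(y)$ with $a\neq b_n\to a$, whose membership in $C$ is governed by weighing the interior but vanishing vector $w_n(a)$ against the time–shift term $\Phi_{t_n+a}(y)-\Phi_{t_n+b_n}(y)$. To control directions I would exploit the standing hypothesis that $D_x\Phi_t$ is compact: writing $w_n(s)=T_n(s)\,w_n(0)$ with $T_n(s)=\int_0^1 D_{\Phi_{t_n}(y)+\sigma w_n(0)}\Phi_s\,d\sigma$ compact and $T_n(s)\to D_z\Phi_s$, the normalized differences $w_n(0)/\norm{w_n(0)}\in\tint C$ then admit, along a subsequence, a limit direction $\hat w\in C\setminus\{0\}$ whose linearized orbit $D_z\Phi_s\hat w$ stays in the closed cone $C$ for all $s\ge 0$. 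This recovers the compactness that the noncompact unit sphere of $X$ otherwise denies, and it is exactly the place where the quantitative cone geometry—the separation index of the focusing operators $T_{x,y}$—must keep $\hat w$ bounded away from $\partial C$.

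With $\hat w$ and the flow extension in hand I would choose the shift $b_n$ so that $\Phi_{t_n+a}(y)-\Phi_{t_n+b_n}(y)$ aligns with $w_n(a)$ at a comparable rate, forcing $\Phi_{t_n+a}(x)-\Phi_{t_n+b_n}(y)\in C$ for all large $n$; letting $n\to\infty$ and using closedness of $C$ yields $\Phi_a(z)-\Phi_b(z)\in C$ for suitable $a\neq b\ge 0$, so that $O^+(z)$ is pseudo-ordered and $z\in Q$. The failure of any admissible choice $a\neq b$ must correspond to $\Phi_s(z)=z$ for all $s$, i.e. to $z$ being an equilibrium, which is the alternative in the statement. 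I expect the main obstacle to be precisely the infinite-dimensional step of the middle paragraph: unlike the finite-dimensional setting of \cite{F-W-W2} one cannot simply pass to a limit of $w_n(0)/\norm{w_n(0)}$, and it is the compactness of $D_x\Phi_t$ together with the focusing separation that both produces $\hat w\in C\setminus\{0\}$ and makes the rate–matching in the final step quantitative rather than merely asymptotic.
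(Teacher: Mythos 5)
Your proposal has two genuine gaps, and it also heads in a direction far heavier than the one the paper points to: the paper's proof is simply a citation of \cite[Lemma 4.3]{F-W-W1}, a reference which (as this paper's introduction notes) treats semiflows that are merely \emph{continuous}, so no differentiable structure can be essential to this lemma. First, your compactness step does not work as written. Compactness of the operators $T_n(s)$ lets you extract a convergent subsequence from the \emph{images} $T_n(s)\bigl[w_n(0)/\norm{w_n(0)}\bigr]=w_n(s)/\norm{w_n(0)}$, not from the arguments $w_n(0)/\norm{w_n(0)}$ themselves; and even the limit of those images may be $0$ unless you first establish a lower bound on $\norm{w_n(s)}/\norm{w_n(0)}$, which strong monotonicity alone does not give. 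The focusing separation index you invoke to keep $\hat w$ away from $\partial C$ is likewise not available: the lemma assumes only strong monotonicity, not the condition of Definition \ref{strongly focusing monotone}(ii). Second, and fatally, the limiting mechanism of your last paragraph cannot produce two \emph{distinct} ordered points of $O^{+}(z)$. For the vanishing interior vector $w_n(a)$ to absorb the time-shift term $\Phi_{t_n+a}(y)-\Phi_{t_n+b_n}(y)$ into $C$ you must take $\abs{a-b_n}=O(\norm{w_n(a)})\to 0$, so $b_n\to a$ and the limit of $\Phi_{t_n+a}(x)-\Phi_{t_n+b_n}(y)$ is $0$, yielding only the trivial relation $\Phi_a(z)\thicksim\Phi_a(z)$; if instead $b_n\to b\neq a$, then $w_n(a)\to 0$ contributes nothing and you have no means of verifying membership in $C$ at finite $n$. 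The two requirements are mutually exclusive, so the pseudo-ordered pair is never produced.

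The missing idea is elementary and requires no limit direction at all: obtain the shifted order relation \emph{once, at a fixed time}, and only then pass to the limit along $t_m$. Concretely, fix $t_1>0$ in the given sequence; strong monotonicity gives $\Phi_{t_1}(x)-\Phi_{t_1}(y)\in{\rm Int}\,C\setminus\{0\}$, and since ${\rm Int}\,C$ is open and $t\mapsto \Phi_{t_1+t}(x)$ is continuous, there is $\eps>0$ with $\Phi_{t_1+t}(x)-\Phi_{t_1}(y)\in{\rm Int}\,C$ for all $t\in[0,\eps]$. Plain monotonicity propagates each such shifted pair forward: $\Phi_{t_m+t}(x)\thicksim\Phi_{t_m}(y)$ for every $m$ and every $t\in[0,\eps]$. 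Letting $m\to\infty$ and using closedness of $C$ gives $\Phi_t(z)-z\in C$ for all $t\in[0,\eps]$. Either $\Phi_t(z)=z$ on $[0,\eps]$, in which case the semigroup property forces $z\in E$, or some $\Phi_t(z)\neq z$ is ordered with $z$ and hence $z\in Q$. The point you missed is precisely this order of operations: the ordered pair is created where $\Phi_{t_1}(x)-\Phi_{t_1}(y)$ still has a definite positive distance to $\partial C$, whereas your scheme tries to create it at time $t_n$ for all large $n$ simultaneously, which is exactly what forces the time shift to degenerate.
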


\begin{proof}See \cite[Lemma 4.3]{F-W-W1}.

\end{proof}

\begin{lemma}\label{P-B thm} Assume that $\Phi_t$ is strongly monotone with respect to a $k$-cone $C$ with $k=2$ and $O^+(x)$ be a nontrivial pseudo-ordered precompact semiorbit. If $\omega(x)\cap E=\emptyset$, $\omega(x)$ is a periodic orbit.
\end{lemma}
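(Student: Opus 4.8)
The plan is to reduce first to the case in which the whole set $\omega(x)$ is ordered, and then to run a Poincar\'{e}--Bendixson argument adapted to the rank-$2$ cone. Since $O^+(x)$ is precompact, $\omega(x)$ is nonempty, compact, connected, invariant and carries a flow extension, so the $\alpha$-limit set of any point of $\omega(x)$ is well defined. Because $O^+(x)$ is pseudo-ordered, I would invoke the Order-Trichotomy for $\omega(x)$ recalled in the introduction (the structure theorem of \cite{F-W-W1}): either $\omega(x)$ is ordered, or $\omega(x)\subset E$, or $\omega(x)$ possesses the ordered homoclinic property. The hypothesis $\omega(x)\cap E=\emptyset$ rules out the second alternative immediately. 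For the third, I would argue, as in the reduction underlying the Poincar\'{e}--Bendixson theorem of \cite{San09,F-W-W1}, that an ordered homoclinic structure necessarily produces a point whose forward and backward limits coincide; by Lemma~\ref{co-limit} applied along such a connection, the common limit is an equilibrium contained in $\omega(x)$, contradicting $\omega(x)\cap E=\emptyset$. Hence $\omega(x)$ is ordered.

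With $\omega(x)$ ordered and equilibrium-free, I would fix $p\in\omega(x)$; its full orbit is nontrivial, and $\alpha(p),\omega(p)$ are again ordered, equilibrium-free, compact invariant subsets of $\omega(x)$. To detect periodicity I would set up a local cross-section $S$ transverse to the flow at $p$. The crux is a \emph{monotone-crossing} property: using that $C$ is $2$-solid, so that a $2$-plane $W$ with $W\setminus\{0\}\subset\tint C$ is available, together with strong monotonicity ($\Phi_t(u)\approx\Phi_t(v)$ whenever $u\thicksim v$, $u\ne v$, $t>0$), I would show that consecutive intersections of a trajectory with $S$ are pairwise ordered and that this ordering is preserved under the return map. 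This is the rank-$2$ substitute for planar orientation: the order relation, read off in the distinguished plane $W$, forces the sequence of crossing points to be monotone.

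From the monotone-crossing property I would conclude that $\omega(p)$ meets $S$ in a single point. Indeed, were there two distinct intersection points, they would be ordered and recurrent, and feeding the monotone return sequence into Lemma~\ref{co-limit} would yield either a forced coincidence of the two crossing points or an equilibrium in $\omega(x)$, both excluded. A single transversal intersection for a recurrent orbit gives that $\omega(p)$ is a periodic orbit $\gamma$. Finally I would upgrade $\omega(p)=\gamma$ to $\omega(x)=\gamma$: every $q\in\omega(x)$ is ordered with every point of $\gamma$, and the ordered, connected, equilibrium-free structure of $\omega(x)$ together with the already-established absence of the homoclinic alternative forces the orbit of $q$ to have $\gamma$ as both its $\alpha$- and $\omega$-limit set, after which ordering squeezes $q$ onto $\gamma$.

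The main obstacle is the monotone-crossing step. Because a rank-$2$ cone is nonconvex, there is no single order-preserving linear functional to serve as a planar first integral, so the orientation must be recovered intrinsically from the $2$-dimensional subspace $W\subset\tint C$ and the strong-monotonicity estimates; and in the infinite-dimensional setting the lack of compactness of the unit ball must be circumvented by working throughout on the compact invariant set $\omega(x)$, exploiting the flow extension and the compactness of $D_x\Phi_t$.
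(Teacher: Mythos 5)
The paper itself offers no argument here: it simply cites \cite[Theorem C]{F-W-W1}, so your outline is really a reconstruction of that cited proof, and its overall architecture (reduce to the case where all of $\omega(x)$ is ordered, then run a planar Poincar\'{e}--Bendixson argument) is the right one. But as written the proposal has genuine gaps at both of its load-bearing points. First, the ``monotone-crossing'' step, which you yourself flag as the main obstacle, is essentially the content of the theorem and is not supplied: building a transversal section and a return map for a semiflow on a Banach space, and proving that successive crossings are ordered and monotone under the return map, is not something strong monotonicity hands you directly. The device that makes this work in \cite{San09} and \cite{F-W-W1} is the one the standing hypotheses are designed for: since $C$ is complemented by a subspace $H^{c}$ with $H^{c}\cap C=\{0\}$, the projection $P$ onto a plane $W$ with $W\setminus\{0\}\subset\tint C$ along $H^{c}$ is \emph{injective on any ordered set} (if $Pu=Pv$ and $u\thicksim v$, then $u-v\in H^{c}\cap C=\{0\}$). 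Once $\omega(x)$ is known to be ordered, $P$ conjugates the flow extension on $\omega(x)$ to a flow on a compact planar set, and the transversal monotonicity you want is then the classical planar one. Your sketch mentions $W$ but never uses complementedness, and without this reduction the crossing argument does not get off the ground in infinite dimensions.

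Second, two of your reductions are imprecise. Alternative (c) of the Order-Trichotomy already entails $\omega(x)\cap E\neq\emptyset$ (the ordered homoclinic structure consists of connections to equilibria inside $\omega(x)$), so it is excluded by the hypothesis outright; the detour through Lemma~\ref{co-limit} is both unnecessary and inconclusive, since that lemma only yields ``$z\in Q$ \emph{or} $z\in E$,'' not an equilibrium. More seriously, the final upgrade from $\omega(p)=\gamma$ to $\omega(x)=\gamma$ is asserted rather than proved: ``ordering squeezes $q$ onto $\gamma$'' is not an argument, and one cannot invoke the classical planar fact that an omega-limit set containing a periodic orbit coincides with it, because $x$ itself does not live in the planar picture --- only $\omega(x)$ projects injectively, while $O^{+}(x)$ is merely pseudo-ordered. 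A compact, connected, equilibrium-free, invariant planar set can perfectly well be an annulus foliated by periodic orbits, so connectedness alone does not finish. This is exactly where Lemma~\ref{co-limit}, or a Jordan-curve separation argument in $W$ applied to points of $\omega(x)\setminus\gamma$ accumulating on $\gamma$, has to be deployed, and the proposal does not do so.
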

\begin{proof} See \cite[Theorem C]{F-W-W1}.
\end{proof}

\subsection{$k$-Exponential Separation and $k$-Lyapunov Exponents}

Let $G(k,X)$ be {\it the Grassmanian of $k$-dimensional linear subspaces of $X$}, which consists of all $k$-dimensional linear subspace in $X$.
$G(k, X)$ is a completed metric space by endowing {\it the gap metric} (see, for example, \cite{Kato,LL}). More precisely, for any nontrivial
closed subspaces $L_1,L_2\subset X$, define that \begin{equation*}\label{E:GapDistance}
 d(L_1,L_2)=\max\left\{\sup_{v\in L_1\cap S}\inf_{u\in L_2\cap S}\norm{v-u}, \sup_{v\in L_2\cap S}\inf_{u\in L_1\cap S}\norm{v-u}\right\},
 \end{equation*} where $S=\{v\in X:\norm{v}=1\}$ is the unit ball.
For a solid $k$-cone $C\subset X$, we denote by $\G_k(C)$ the set of $k$-dimensional subspaces inside $C$, that is,
$$\G_k(C)=\{L\in G(k,X):\,L\subset C\}.$$
Let $\Sigma\subset X$ be a compact invariant subset for $\Phi_t$ and we consider the linear skew-product semiflow $(\Phi_t,\,D\Phi_t)$ on $\Sigma\times X$,
which is defined as $(\Phi_t, D\Phi_t)(x,v)=(\Phi_t(x),D_x\Phi_t v)$ for any $(x,v)\in \Sigma\times X$ and $t>0$. Here, $D_x\Phi_t$ is the Fr\'{e}chet derivative of $\Phi_t$ at $x\in \Sigma$. Let $\{E_x\}_{x\in \Sigma}$ be a family of $k$-dimensional subspaces of $X$. We call $\Sigma\times (E_x)$ {\it a $k$-dimensional continuous vector bundle on $X$} if the map $\Sigma\mapsto G(k,\,X): x\mapsto E_x$ is continuous. Let $\{F_x\}_{x\in \Sigma}$ be a family of $k$-codimensional closed vector subspaces of $X$. We call $\Sigma\times (F_x)$ {\it a $k$-codimensional continuous vector bundle on $X$} if there is a $k$-dimensional continuous vector bundle $\Sigma\times (L_x)\subset \Sigma\times X^*$ such that the kernel ${\rm Ker}(L_x)=F_x$ for each $x\in \Sigma$.
Here, $X^*$ is the dual space of $X$.

Let $\Sigma\times (E_x)$ be a $k$-dimensional continuous vector bundle on $X$, and let $\Sigma\times (F_x)$ be a $k$-codimensional continuous vector bundle
on $X$ such that $X=E_x\oplus F_x$ for all  $x\in \Sigma$. We define the {\it family of projections associated with the decomposition}
$X=E_x\oplus F_x$ as $\{P_x\}_{x\in \Sigma}$ where $P_x$ is the linear projection of $X$ onto $E_x$ along $F_x$, for each $x\in \Sigma$.
Write $Q_x=I-P_x$ for each $x\in \Sigma$. Clearly, $Q_x$ is the linear projection of $X$ onto $F_x$ along $E_x$. Moreover,
both $P_x$ and $Q_x$ are continuous with respect to $x\in \Sigma$. We say that the decomposition $X=E_x\oplus F_x$ is
{\it invariant with respect to $(\Phi_t,\,D\Phi_t)$} if $D_x\Phi_tE_x=E_{\Phi_t(x)}$, $D_x\Phi_tF_{x}\subset F_{\Phi_t(x)}$ for each $x\in \Sigma$
and $t\ge 0$.

\begin{definition}\label{D:ES-separation}
{\rm Let $\Sigma\subset X$ be a compact invariant subset for $\Phi_t$. The linear skew-product semiflow $(\Phi_t,\,D\Phi_t)$ admits a
{\bf $k$-exponential separation along $\Sigma$} (for short, $k$-exponential separation), if there are $k$-dimensional continuous bundle
$\Sigma\times (E_x)$ and $k$-codimensional continuous bundle $\Sigma\times (F_x)$ such that

{\rm (i)} $X=E_x\oplus F_x$, for any $x\in \Sigma$;

{\rm (ii)} $D_x\Phi_tE_x=E_{\Phi_t(x)}$, $D_x\Phi_tF_{x}\subset F_{\Phi_t(x)}$ for any $x\in \Sigma$ and $t>0$;

{\rm (iii)}  there are constants $M>0$ and $0<\gamma<1$ such that
$$\norm{D_x\Phi_tw}\leq M\gamma^{t}\norm{D_x\Phi_tv}$$ for all $x\in \Sigma$, $w\in F_x\cap S$, $v\in E_{x}\cap S$
and $t\ge 0$, where $S=\{v\in X:\norm{v}=1\}$.

Let $C\subset X$ be a solid $k$-cone. If, in addition,

{\rm (iv)} $E_x\subset {\rm Int}C\cup \{0\}$ and $F_x\cap C=\{0\}$ for any $x\in \Sigma$,

\noindent then $(\Phi_t,\,D\Phi_t)$ is said to admit a {\bf $k$-exponential separation along $\Sigma$ associated with $C$}.}
\end{definition}

Since $E_x$ is $k$ dimensional for any $x\in \Sigma$, one can define {\it the infimum norm $m(D_x\Phi_t|_{_{E_x}})$
of $D_x\Phi_t$ restricted to $E_x$} for each $x\in \Sigma$ and $t\geq 0$ as follows:
\begin{equation}\label{infimum norm}m(D_x\Phi_t|_{_{E_x}})=\inf\limits_{v\in E_x\cap S}\norm{D_x\Phi_tv},\end{equation} where $S=\{v\in X: \norm{v}=1\}$.

\begin{definition}\label{Lyapunov expnents}{\rm For each $x\in \Sigma$, {\it the $k$-Lyapunov exponent} is defined as
\begin{equation}
\lambda_{kx}=\limsup_{t\to +\infty}\dfrac{\log m(D_x\Phi_t|_{_{E_x}})}{t}.
\end{equation}}
\end{definition} A point $x\in \Sigma$ is called {\it a regular point} if
$\lambda_{kx}=\lim\limits_{t\to +\infty}\dfrac{\log m(D_x\Phi_t|_{_{E_x}})}{t}$.
\vskip 3mm

\begin{lemma}\label{P:Cones-imply-ES} Assume that $\Sigma\subset X$ be a compact invariant subset with respect to $\Phi_t$ and $\Phi_t$ is $C^1$-smooth such that $D_x\Phi_t (C\setminus\{0\})\subset \text{Int}\,C$ for any $x\in \Sigma$ and $t>0$. Then, $(\Phi_t,D\Phi_t)$ admits a $k$-exponential separation along $\Sigma$ associated with $C$.
\end{lemma}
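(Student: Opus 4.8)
The plan is to realize $(\Phi_t,D\Phi_t)$ as a cocycle acting on the Grassmannian $G(k,X)$, to extract the dominant bundle $\Sigma\times(E_x)$ as the unique attracting section inside the cone, to obtain the complementary bundle $\Sigma\times(F_x)$ by a dual construction, and finally to upgrade the attraction into a uniform exponential domination. Throughout I will use freely the standing assumption that $D_x\Phi_t$ is compact, which is what restores the compactness lost by passing to an infinite-dimensional $X$.

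First I would study the induced action $L\mapsto D_x\Phi_t L$ on $G(k,X)$. Since $C$ is $k$-solid, $\G_k(C)$ is nonempty, and the strong positivity $D_x\Phi_t(C\setminus\{0\})\subset\tint C$ forces every $L\in\G_k(C)$ to be sent to a $k$-subspace whose nonzero vectors lie in $\tint C$. The key point is that, equipping the interior $k$-subspaces with a Hilbert/Birkhoff-type projective metric adapted to $C$ (in the spirit of the Krein--Rutman theory for $k$-cones of \cite{KLS}), the time-$t_0$ map $L\mapsto D_x\Phi_{t_0}L$ becomes a uniform contraction for a fixed $t_0>0$, with contraction factor controlled uniformly over the compact base $\Sigma$. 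Compactness of $D_x\Phi_{t_0}$ is what makes this metric complete on the relevant part of $\G_k$ and guarantees a contraction constant strictly below one. A fibered contraction (graph-transform) argument over the invariant base then produces a continuous section $x\mapsto E_x$ with $E_x\in\G_k(C)$, $E_x\setminus\{0\}\subset\tint C$, and $D_x\Phi_t E_x=E_{\Phi_t(x)}$; this yields the $E_x$ parts of (i), (ii) and the first half of (iv).

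To build $F_x$ I would pass to the dual cocycle $(D_x\Phi_t)^{*}$ on $X^{*}$ together with the dual cone $C^{*}$. Because $C$ is $k$-solid and complemented, $C^{*}$ is again a $k$-cone and the dual cocycle is strongly positive with respect to $C^{*}$; applying the previous step to the dual furnishes a continuous $k$-dimensional invariant section $x\mapsto L_x\subset X^{*}$. This matches the paper's very definition of a $k$-codimensional continuous bundle, so setting $F_x=\mathrm{Ker}(L_x)$ directly produces a $k$-codimensional continuous bundle, and positivity of $L_x$ on $C$ forces $F_x\cap C=\{0\}$. Transversality $X=E_x\oplus F_x$ follows because every nonzero vector of $E_x$ lies in $\tint C$ while $F_x$ misses $C$, so $E_x\cap F_x=\{0\}$, and a dimension/codimension count closes the direct sum; forward invariance $D_x\Phi_t F_x\subset F_{\Phi_t(x)}$ is immediate from invariance of $L_x$ under the dual cocycle. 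This completes (i), (ii) and the second half of (iv).

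Finally, for the exponential domination (iii) I would translate the Grassmannian contraction rate into norm estimates. By compactness of $\Sigma$ and continuity of $x\mapsto E_x$, the bundle is uniformly interior, giving a uniform positive lower bound on $\underline{\mathrm{dist}}(E_x\cap S,\,X\setminus C)$; dually $F_x$ stays uniformly away from $E_x$, so the projections $P_x,Q_x$ are uniformly bounded. One then shows that for $w\in F_x\cap S$ the direction $D_x\Phi_t w$ is pushed, at the contraction rate of the first step, into the interior cone around $E_{\Phi_t(x)}$, whereas $D_x\Phi_t v$ for $v\in E_x\cap S$ remains uniformly interior; quantifying this produces constants $M>0$ and $0<\gamma<1$ with $\norm{D_x\Phi_t w}\le M\gamma^{t}\norm{D_x\Phi_t v}$, uniformly in $x\in\Sigma$. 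I expect this last step---securing a \emph{uniform} exponential rate rather than merely pointwise domination---to be the main obstacle, precisely because the unit ball of $X$ is noncompact; the decisive input overcoming it is again the compactness of $D_x\Phi_t$, which confines the subdominant directions to a set on which the contraction estimate can be made uniform over the compact base $\Sigma$.
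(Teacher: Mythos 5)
The paper does not actually prove this lemma: its ``proof'' is a citation to Tere\v{s}\v{c}\'{a}k \cite[Corollary 2.2 and Theorem 4.1]{Te1}, where the exponential separation is established for cocycles of compact, strongly positive operators over a compact base. Your outline reconstructs the general shape of such arguments (an invariant-section/graph-transform construction on the Grassmannian for the fast bundle $E_x$, a dual construction for $F_x$, then an upgrade to uniform exponential domination), but two of your steps fail as stated, and they fail exactly at the points that constitute the real content of the cited theorem.

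First, the contraction step. For $k\ge 2$ the cone $C$ is not convex, so there is no Birkhoff--Hilbert projective metric on $\tint C$; the known substitutes for $k$-cones (in \cite{KLS} and \cite{LW2}) yield a contraction on $\G_k(C)$ only for \emph{focusing} operators, i.e.\ operators whose normalized image of $C$ is uniformly separated from $\partial C$. In infinite dimensions a strongly positive compact operator need not be focusing --- the paper is built around precisely this distinction (Definition \ref{postive,strongly focusing} and the remark immediately following it, which asserts the implication only on $\mathbb{R}^n$). Compactness of $D_x\Phi_{t_0}$ does not close the gap: if $v_n\in C\cap S$ with $\norm{D_{x_n}\Phi_{t_0}v_n}\to 0$, nothing forces the normalized images $D_{x_n}\Phi_{t_0}v_n/\norm{D_{x_n}\Phi_{t_0}v_n}$ to subconverge, and they may approach $\partial C$; moreover $\G_k(C)$ is not compact in the gap metric when $\dim X=\infty$, so ``contraction factor controlled uniformly over $\Sigma$'' is the statement to be proved, not a consequence of compactness of the base. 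Second, the dual step is broken: since a $k$-cone satisfies $C=-C$, the dual cone $C^{*}=\{f\in X^{*}:f|_{C}\ge 0\}$ consists of functionals vanishing on $C$, and solidity of $C$ then forces $C^{*}=\{0\}$, so there is no strongly positive action of the adjoint cocycle on ``the dual cone.'' The slow bundle must be obtained by a genuinely different device (the annihilator/polar structure attached to a complementary cone, or a limiting construction as in \cite{Mane,LL}), and one must also account for the fact that the adjoint operators form a cocycle over the \emph{time-reversed} base flow, so ``apply the previous step to the dual'' does not typecheck directly. As the paper does, the honest options are to quote \cite{Te1} (or the detailed Banach-space construction in \cite{LW2}) or to supply these two missing arguments in full.
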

\begin{proof}See Tere$\check{s}\check{c}$\'{a}k \cite[Corollary 2.2]{Te1}. One may also refer to Tere$\check{s}\check{c}$\'{a}k \cite[Theorem 4.1]{Te1}.
\end{proof}
Now, we give some crucial lemmas for $(\Phi_t,D\Phi_t)$ admitting a $k$-exponential separation $X=E_y\oplus F_y$ along a compact invariant set $\Sigma$ associated with $C$, which satisfies (i)-(iv) in Definition \ref{D:ES-separation}.

\begin{lemma}\label{vector-in-C-farawayfrom-V1-delta} There exists a constant $\delta^{\prime}>0$ such that
$$\{v\in X: d(v,\,E_x\cap S)\leq\delta^{\prime}\}\subset\text{Int}\,C \,\,\,\text{for any}\,\,x\in \Sigma.$$
\end{lemma}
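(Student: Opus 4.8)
The plan is to reduce the claimed uniform separation to a pointwise one and then upgrade it to a uniform bound using the compactness of $\Sigma$ together with the gap-metric continuity of the bundle $x\mapsto E_x$. First, for a fixed $x\in\Sigma$ I would introduce the pointwise separation
\[
\delta'_x \;=\; \inf\bigl\{\norm{v-u}:\, v\in E_x\cap S,\ u\in X\setminus\text{Int}\,C\bigr\}.
\]
(The set $X\setminus\text{Int}\,C$ is nonempty, since $F_x$ is an infinite-dimensional subspace with $F_x\cap C=\{0\}$, so its nonzero vectors lie outside $C$.) Because $E_x$ is $k$-dimensional, its unit sphere $E_x\cap S$ is compact; by property (iv) of Definition \ref{D:ES-separation} every nonzero vector of $E_x$ lies in $\text{Int}\,C$, so $E_x\cap S$ is a compact subset of the open set $\text{Int}\,C$, hence disjoint from the closed set $X\setminus\text{Int}\,C$. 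Since a compact set and a disjoint closed set in a metric space are a positive distance apart, $\delta'_x>0$ for every $x\in\Sigma$.

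The decisive step is to show that $\delta_0:=\inf_{x\in\Sigma}\delta'_x>0$, and I would argue by contradiction. If $\delta_0=0$, choose $x_n\in\Sigma$, $v_n\in E_{x_n}\cap S$ and $u_n\in X\setminus\text{Int}\,C$ with $\norm{v_n-u_n}\to0$. By compactness of $\Sigma$ pass to a subsequence with $x_n\to x_*\in\Sigma$. Continuity of $x\mapsto E_x$ into $G(k,X)$ for the gap metric gives $d(E_{x_n},E_{x_*})\to0$, so $\inf_{w\in E_{x_*}\cap S}\norm{v_n-w}\le d(E_{x_n},E_{x_*})\to0$; as $E_{x_*}\cap S$ is compact this infimum is attained at some $w_n\in E_{x_*}\cap S$ with $\norm{v_n-w_n}\to0$. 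Passing to a further subsequence, $w_n\to w_*\in E_{x_*}\cap S$, whence $v_n\to w_*$ and therefore $u_n\to w_*$ as well. Property (iv) forces $w_*\in\text{Int}\,C$, while $u_n\in X\setminus\text{Int}\,C$ and this set is closed, so $w_*\in X\setminus\text{Int}\,C$, a contradiction. Hence $\delta_0>0$.

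Finally I would take $\delta'=\tfrac12\delta_0$. If some $v$ satisfied $d(v,E_x\cap S)\le\delta'$ yet $v\notin\text{Int}\,C$, then $v\in X\setminus\text{Int}\,C$ would give $d(v,E_x\cap S)\ge\delta'_x\ge\delta_0=2\delta'>\delta'$, a contradiction; thus the entire $\delta'$-neighborhood of $E_x\cap S$ lies in $\text{Int}\,C$, uniformly in $x\in\Sigma$, as claimed.

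The main obstacle is that the unit ball of the infinite-dimensional space $X$ is not compact, so one cannot extract a convergent subsequence of the $v_n\in E_{x_n}\cap S$ directly. The point that makes the argument work is the gap-metric continuity of the finite-dimensional bundle $x\mapsto E_x$: it lets each $v_n$ be replaced by a nearby vector $w_n$ in the single finite-dimensional sphere $E_{x_*}\cap S$, where compactness is restored. The remaining ingredients, namely the disjointness coming from property (iv) and the positivity of the distance between a compact set and a disjoint closed set, are routine.
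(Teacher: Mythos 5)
Your argument is correct, and it is essentially the argument the paper relies on: the paper only cites \cite[Lemma 3.3]{F-W-W2}, whose proof is the same compactness-plus-contradiction scheme, with your use of the gap-metric continuity of $x\mapsto E_x$ to pull the sequence $v_n$ into the single compact sphere $E_{x_*}\cap S$ being exactly the observation that justifies the paper's remark that the finite-dimensional argument ``remains valid'' on a Banach space. No gaps; your write-up is in fact more self-contained than the paper's.
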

\begin{proof} See \cite[Lemma 3.3]{F-W-W2}. We here point out that all arguments in \cite[Lemma 3.3]{F-W-W2} still remain valid for $C^{1}$-semiflow $\Phi_t$ on a Banach space.
\end{proof}

\begin{lemma}\label{L:ES-Vs-Cone}
{\rm (i)} The projections $P_x$ and $Q_x$ are bounded uniformly for $x\in \Sigma$.

{\rm (ii)}  There exists a constant $C_1>0$ such that, if $v\in X\setminus\{0\}$ satisfies $\norm{P_x(v)}\geq C_1 \norm{Q_x(v)}$ for some $x\in \Sigma$, then $v\in \text{Int}\,C$.
\end{lemma}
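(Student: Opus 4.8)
The plan is to derive both parts from a single geometric input, the uniform thickness estimate of Lemma~\ref{vector-in-C-farawayfrom-V1-delta} (a constant $\delta'>0$ with $\{v\in X:\,d(v,E_x\cap S)\leq\delta'\}\subset\text{Int}\,C$ for every $x\in\Sigma$), combined with the transversality $F_x\cap C=\{0\}$ from Definition~\ref{D:ES-separation}(iv). Throughout I write $e=P_x v\in E_x$ and $f=Q_x v\in F_x$, so that $v=e+f$.

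I would establish (ii) first, as it is essentially a rescaling of Lemma~\ref{vector-in-C-farawayfrom-V1-delta}. Fix the point $x$ and the vector $v\neq0$ from the hypothesis. The inequality $\norm{e}\geq C_1\norm{f}$ forces $e\neq0$, for otherwise $f=0$ and $v=0$. Put $\hat{e}=e/\norm{e}\in E_x\cap S$; then $v/\norm{e}-\hat{e}=f/\norm{e}$, so $d(v/\norm{e},\,E_x\cap S)\leq\norm{f}/\norm{e}$. Choosing $C_1=1/\delta'$ makes the right-hand side at most $\delta'$, whence $v/\norm{e}\in\text{Int}\,C$ by Lemma~\ref{vector-in-C-farawayfrom-V1-delta}. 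Since $C$ is a $k$-cone, scaling by a nonzero real is a homeomorphism of $X$ preserving $C$, hence preserving $\text{Int}\,C$; therefore $v=\norm{e}\cdot(v/\norm{e})\in\text{Int}\,C$, as required.

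For (i) I would argue by contradiction using the same picture. Suppose $\sup_{x\in\Sigma}\norm{P_x}=\infty$ and choose $x_n\in\Sigma$ and unit vectors $v_n$ with $\norm{e_n}\to\infty$, where $e_n=P_{x_n}v_n$ and $f_n=Q_{x_n}v_n=v_n-e_n\in F_{x_n}$. Dividing by $\norm{e_n}$ and setting $\hat{e}_n=e_n/\norm{e_n}\in E_{x_n}\cap S$ gives $f_n/\norm{e_n}+\hat{e}_n=v_n/\norm{e_n}\to0$, so $d(f_n/\norm{e_n},\,E_{x_n}\cap S)\leq\norm{f_n/\norm{e_n}+\hat{e}_n}\to0$ (using that $-\hat{e}_n\in E_{x_n}\cap S$). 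For $n$ large this distance is below $\delta'$, so Lemma~\ref{vector-in-C-farawayfrom-V1-delta} places the nonzero vector $f_n/\norm{e_n}\in F_{x_n}$ inside $\text{Int}\,C$, contradicting $F_{x_n}\cap C=\{0\}$. Hence $\sup_x\norm{P_x}<\infty$, and $\norm{Q_x}\leq1+\norm{P_x}$ then bounds $Q_x$ uniformly as well.

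The main obstacle is part (i): in infinite dimensions an oblique projection onto a continuously varying splitting need not be uniformly bounded, so neither pointwise boundedness nor the mere continuity of $x\mapsto P_x$ is by itself enough. The cone structure is exactly what supplies the missing compactness — a blow-up of $\norm{P_x}$ would drive a nonzero element of $F_x$ arbitrarily close to the direction set $E_x\cap S$ and hence into $\text{Int}\,C$, which is impossible by transversality. The only elementary point I would verify along the way is that $0\notin\text{Int}\,C$, so that $F_x\cap\text{Int}\,C=\emptyset$; this holds because a proper $k$-cone with finite $k$ cannot contain a neighbourhood of the origin.
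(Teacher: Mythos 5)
Your proof is correct. There is nothing to compare it against line by line, because the paper does not prove this lemma: it only cites \cite{F-W-W2} (Lemma 3.5(i)--(ii)) and asserts that the arguments there remain valid on a Banach space. Your argument is a legitimate self-contained substitute built from exactly the two ingredients available at this point in the paper: the uniform collar $\{v:\,d(v,E_x\cap S)\leq\delta'\}\subset\text{Int}\,C$ of Lemma \ref{vector-in-C-farawayfrom-V1-delta} and the transversality $F_x\cap C=\{0\}$ from Definition \ref{D:ES-separation}(iv). Part (ii) with $C_1=1/\delta'$ is the standard rescaling of the collar and very likely coincides with the cited proof. Part (i) is where your route is genuinely different and arguably better suited to the infinite-dimensional claim: in the finite-dimensional setting of \cite{F-W-W2} one can get uniform boundedness of $P_x$ from norm-continuity of $x\mapsto P_x$ together with compactness of $\Sigma$, whereas your blow-up argument --- a sequence with $\norm{P_{x_n}v_n}\to\infty$ forces a nonzero element of $F_{x_n}$ into the $\delta'$-collar of $E_{x_n}\cap S$ and hence into $\text{Int}\,C$, contradicting (iv) --- uses only the cone geometry and does not require operator-norm continuity of the projections. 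The one micro-step you leave implicit is that $f_n\neq 0$ for large $n$; this is automatic, since $d\bigl(f_n/\norm{e_n},\,E_{x_n}\cap S\bigr)\leq 1/\norm{e_n}<1$ eventually, while $d(0,E_{x_n}\cap S)=1$, so the vector being placed in $\text{Int}\,C$ cannot be $0$.
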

\begin{proof} See \cite[Lemma 3.5(i) and (ii)]{F-W-W2}. The arguments in \cite[Lemma 3.5(i) and (ii)]{F-W-W2} are also effective for semiflow $\Phi_t$ on a Banach space.
\end{proof}

\begin{lemma}\label{E-S-and-Lya-exponent} Let $x\in \Sigma$. Then

{\rm (i)} If $w\in F_x\setminus\{0\}$, then $\lambda(x,w)\leq \lambda_{kx}+\log(\gamma)$, where $\lambda(x,w)=\limsup\limits_{t\to \infty}
\frac{\log\norm{D_x\Phi_t w}}{t}.$

{\rm (ii)} Let $x$ be a regular point. If $\lambda_{kx}\leq 0$, then there exists a number $\beta\in (\gamma,1)$ such that for any $\epsilon>0$,
there is a constant $C_{\epsilon}>0$ such that $$\norm{D_{\Phi_{t_1} (x)}\Phi_{t_2}w}\leq C_{\epsilon}e^{\epsilon t_1}\beta^{t_2}\norm{w}$$
for any $w\in F_{\Phi_{t_1}(x)}\setminus\{0\}$ and $t_1,t_2>0$.
\end{lemma}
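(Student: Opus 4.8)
For part (i), the plan is to exploit the exponential-separation inequality (iii) of Definition \ref{D:ES-separation} together with the definition of the infimum norm. Fix $w\in F_x\cap S$; the case of general $w\in F_x\setminus\{0\}$ then follows by scaling, since $\lambda(x,w)$ is unchanged when $w$ is rescaled. By (iii), $\norm{D_x\Phi_t w}\leq M\gamma^{t}\norm{D_x\Phi_t v}$ for \emph{every} $v\in E_x\cap S$, so taking the infimum over such $v$ yields $\norm{D_x\Phi_t w}\leq M\gamma^{t}\,m(D_x\Phi_t|_{E_x})$. Taking logarithms, dividing by $t$, and passing to the $\limsup$ as $t\to\infty$ kills the $\tfrac{\log M}{t}$ term and gives $\lambda(x,w)\leq \log\gamma+\lambda_{kx}$, which is precisely the claim. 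This part is entirely routine.

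For part (ii), write $y=\Phi_{t_1}(x)\in\Sigma$ and take $w\in F_y\cap S$. The starting point is again the reduction from (iii): $\norm{D_y\Phi_{t_2}w}\leq M\gamma^{t_2}\,m(D_y\Phi_{t_2}|_{E_y})$, so the whole problem reduces to controlling the infimum norm $m(D_{\Phi_{t_1}(x)}\Phi_{t_2}|_{E_{\Phi_{t_1}(x)}})$ along the orbit of the regular point $x$. The key algebraic observation is the cocycle identity $D_x\Phi_{t_1+t_2}|_{E_x}=\bigl(D_{\Phi_{t_1}(x)}\Phi_{t_2}|_{E_{\Phi_{t_1}(x)}}\bigr)\circ\bigl(D_x\Phi_{t_1}|_{E_x}\bigr)$, valid because $E$ is an invariant bundle and each restriction is an isomorphism between $k$-dimensional spaces. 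Combining this with the submultiplicativity of the infimum norm, $m(AB)\geq m(A)m(B)$ (which holds since $\norm{ABv}\geq m(A)\norm{Bv}\geq m(A)m(B)\norm{v}$), produces
\begin{equation*}
m\!\left(D_{\Phi_{t_1}(x)}\Phi_{t_2}|_{E_{\Phi_{t_1}(x)}}\right)\leq \frac{m(D_x\Phi_{t_1+t_2}|_{E_x})}{m(D_x\Phi_{t_1}|_{E_x})},
\end{equation*}
where the division is legitimate because $D_x\Phi_{t_1}|_{E_x}$ is an isomorphism, hence $m(D_x\Phi_{t_1}|_{E_x})>0$.

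Now I would invoke regularity of $x$ in its full, two-sided form. Since $x$ is regular, $\tfrac1t\log m(D_x\Phi_t|_{E_x})\to\lambda_{kx}$, so for any $\epsilon'>0$ there is a constant $C$ giving $C^{-1}e^{(\lambda_{kx}-\epsilon')t}\leq m(D_x\Phi_t|_{E_x})\leq Ce^{(\lambda_{kx}+\epsilon')t}$ for all $t\geq 0$ (the values over the initial compact time interval being absorbed into $C$, as the restricted derivatives are isomorphisms and thus $m$ stays bounded away from $0$ and $\infty$ there). Substituting the upper bound for the numerator and the lower bound for the denominator into the displayed inequality and using $\lambda_{kx}\leq 0$ bounds $\norm{D_y\Phi_{t_2}w}$ by a quantity of the shape $MC^{2}\,e^{(\log\gamma+\lambda_{kx}+\epsilon')t_2}\,e^{2\epsilon' t_1}$. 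The final step is bookkeeping: fix $\beta=\sqrt{\gamma}\in(\gamma,1)$ (so that $\log\gamma<\log\beta<0$), and given $\epsilon>0$ set $\epsilon'=\min\{\log\beta-\log\gamma,\ \epsilon/2\}$. Then the $t_2$-exponent satisfies $\log\gamma+\lambda_{kx}+\epsilon'\leq\log\gamma+\epsilon'\leq\log\beta$ while the $t_1$-exponent satisfies $2\epsilon'\leq\epsilon$; absorbing $M$ and $C^2$ into $C_\epsilon$ gives the asserted bound for $w\in F_y\cap S$, and general $w\in F_y\setminus\{0\}$ follows by scaling.

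I expect the main obstacle to be conceptual rather than computational: the estimate forces one to use regularity in both directions. The $\limsup$ defining $\lambda_{kx}$ only controls the numerator $m(D_x\Phi_{t_1+t_2}|_{E_x})$, but confining the spurious $t_1$-dependence to an arbitrarily small exponential rate $e^{\epsilon t_1}$ requires a matching \emph{lower} bound on the denominator $m(D_x\Phi_{t_1}|_{E_x})$, and this is exactly where the hypothesis that $x$ is regular (so the limit, not merely the $\limsup$, exists) becomes indispensable. The secondary technical points to verify carefully are the submultiplicativity $m(AB)\geq m(A)m(B)$ and the invertibility of $D_x\Phi_{t_1}|_{E_x}$, which together legitimize the ratio estimate and the absorption of the small-time values of $m$ into the constant.
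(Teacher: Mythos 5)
Your proof is correct, and it follows exactly the route that the paper intends: the paper offers no argument of its own here, only the citation to \cite{F-W-W2}, and your chain (domination inequality of Definition \ref{D:ES-separation}(iii) reducing everything to $m(D_\cdot\Phi_\cdot|_{E_\cdot})$, then the cocycle identity with $m(AB)\geq m(A)m(B)$, then two-sided use of regularity to control both numerator and denominator) is precisely the standard argument behind that reference. Your closing remark correctly identifies the one place where regularity, rather than the mere $\limsup$, is indispensable.
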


\begin{proof} The results are directly implied by repeating all arguments in \cite[Lemma 3.6]{F-W-W2}
\end{proof}
\begin{rem}\label{the effection of Lemmas} Lemma \ref{vector-in-C-farawayfrom-V1-delta}-\ref{E-S-and-Lya-exponent} are the infinite demensional version of \cite[Lemma 3.3,3.5(i)-(ii),3.6]{F-W-W2}. Lemma \ref{co-limit} and them are crucial tools for the arguments of Theorem \ref{regular-lambdak>0} and Lemma \ref{oribit-lambdak<0}.
\end{rem}

\section{Main results}
\indent Let $C_E=\{x\in X:\, \text{the positive semiorbit}\, O^+(x) \text{ converges to an equilibrium}\}.$

{\bf Theorem A. (Generic dynamics thoerem)} Assume that $\Phi_t$ is a $C^{1,\alpha}$-smooth semiflow strongly focusing monotone with respect to a $k$-cone $C$. Let $\mathcal{D}\subset X$ be an open bounded set such that $\mathcal{O^+}(\mathcal{D})$ is precompact. Then ${\rm Int}(Q\cup C_E)$ {\rm ({\it interior in $X$})} is dense in $\mathcal{D}$.

\begin{rem}
{\rm Theorem A states that, for smooth semiflow $\Phi_t$ strongly focusing monotone with respect to $k$-cone $C$, generic (open and dense) positive semiorbits are either pseudo-ordered or convergent to equilibria.
If the rank $k=1$, Theorem A automatically implies Hirsch's Generic Convergence Theorem due to the Monotone Convergence Criterion.}
\end{rem}
\vskip 3mm

{\bf Theorem B. (Generic Poincar\'{e}-Bendixson theorem)} Assume that $\Phi_t$ is a $C^{1,\alpha}$-smooth semiflow strongly focusing monotone with respect to a $k$-cone $C$. Let $k=2$ and $\mathcal{D}\subset X$ be an open bounded set such that $\mathcal{O}^+(\mathcal{D})$ is precomact. Then, for generic (open and dense) points $x\in \mathcal{D}$, the omega-limit set $\omega(x)$ containing no equilibria is a periodic orbit.

\section{Local behaviors of omega-limit sets}
Due to Lemma \ref{P:Cones-imply-ES}, we hereafter always assume that the linear skew-product semiflow $(\Phi_t,D\Phi_t)$ admits a $k$-exponential separation along each compact invariant set $\Sigma$ such that $X=E_x\oplus F_x$. $\Sigma \times (E_x)$ and $\Sigma \times (F_x)$ are the corresponding $k$-dimensional and $k$-codimensional invariant subbundles. In this paper, we attempt to extend the works on generic dynamics from classical monotone systems w.r.t. convex cones (see \cite{Hir2}) and flows strongly monotone w.r.t. $k$-cones (see \cite{F-W-W2}) to the semiflows strongly focusing monotone w.r.t. $k$-cones on an infinite dimensional Banach space.

We define the set of {\it regular points} on $\omega(x)$ as:
\begin{equation}\label{E:rg-point}
\omega_0(x)=\{z\in\omega(x):\,z\,\, \text{is a regular point}\}.\end{equation}
\noindent Due to the Multiplicative Ergodic Theorem (cf. \cite[Theorem A]{Mane}), $\omega_0(x)$ is non-empty. Moreover, it is easy to see that any equilibrium in $\omega(x)$ is regular and hence, is contained in $\omega_0(x)$. By utilizing the $k$-Lyapunov exponents on $\omega(x)$, we classify the omega-limit sets into three types and obtain the related local behaviors.

Firstly, we prove that if $\lambda_{kz}>0$ for any $z\in \omega(x)$, then $x$ is highly unstable (see Lemma \ref{Distance-nonlinear-system}), and meanwhile, it belongs to the closure $\overline{Q}$ (see Theorem \ref{omega-lambdak>0}). We secondly show that if $\lambda_{k\tilde{z}}>0$ for any regular point $\tilde{z}\in \omega_0(x)$ but there is an irregular point $z$ with $\lambda_{kz}\leq 0$, then $x\in \overline{Q}$ (see Theorem \ref{regular-lambdak>0}). We finally show that if $\omega(x)$ contains a regular point $z$ such that $\lambda_{kz}\le 0$, then either $x\in Q$ or $\omega(x)$ is a singleton (see Theorem \ref{omega-lambdak<=0}).

\vskip 2mm
We start with discussion on the case that $\lambda_{kz}>0$ for any point $z\in \omega(x)$. Before going further, we give two technical lemmas.

\begin{lemma}\label{expand-in-Ez} If $\lambda_{kz}>0$ for any $z\in \omega(x)$, then for any constant $\kappa>0$, there is a locally constant
(hence bounded) function $\nu_{\kappa}(z)$ on $\omega(x)$ (depending on $\kappa$) such that
\begin{equation}\label{eq:nu-kappa}\begin{aligned}&\frac{\norm{D_z\Phi_{\nu_{\kappa}(z)}w_F}}{\norm{D_z\Phi_{\nu_{\kappa}(z)}w_E}}
<\frac{\kappa}{2(1+\kappa)},\\&\norm{D_z\Phi_{\nu_{\kappa}(z)}w_E}>\frac{4}{\kappa}\end{aligned}\end{equation}
for any $z\in \omega(x)$ and $w_E\in E_z\cap S$ and $w_F\in F_z\cap S$, where $S=\{v\in X: \norm{v}=1\}$.
\end{lemma}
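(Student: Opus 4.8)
The plan is to isolate the two estimates in (\ref{eq:nu-kappa}): the first is a uniform consequence of the exponential separation and holds as soon as the time is large, while the second carries all the content and is the step that must be made uniform in $z$.

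For the first inequality I would appeal directly to property (iii) of the $k$-exponential separation in Definition \ref{D:ES-separation}. Taking $x=z$, $w=w_F\in F_z\cap S$ and $v=w_E\in E_z\cap S$ there gives
$$\frac{\norm{D_z\Phi_t w_F}}{\norm{D_z\Phi_t w_E}}\le M\gamma^{t}\qquad(z\in\omega(x),\ t\ge 0),$$
with the \emph{same} constants $M>0$ and $\gamma\in(0,1)$ for all $z$. Since $\gamma<1$, fixing $T_1=T_1(\kappa)$ with $M\gamma^{T_1}<\frac{\kappa}{2(1+\kappa)}$ makes the first inequality of (\ref{eq:nu-kappa}) hold for every $z\in\omega(x)$ and every $t\ge T_1$. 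Thus the first bound is never the obstruction once the chosen time exceeds $T_1$.

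For the second inequality, fix $z$. Because $\lambda_{kz}>0$ and $\lambda_{kz}=\limsup_{t\to\infty}\frac{\log m(D_z\Phi_t|_{E_z})}{t}$, the infimum norm $m(D_z\Phi_t|_{E_z})$ is unbounded along some sequence $t\to\infty$, so there is a time $\tau_z\ge T_1$ with $m(D_z\Phi_{\tau_z}|_{E_z})>4/\kappa$, i.e. $\norm{D_z\Phi_{\tau_z}w_E}>4/\kappa$ for all $w_E\in E_z\cap S$. Both estimates of (\ref{eq:nu-kappa}) then hold at $z$ for the time $\tau_z$. Since $\Phi$ is $C^1$ and the bundle $z\mapsto E_z$ is continuous in the gap metric, $z\mapsto m(D_z\Phi_{\tau_z}|_{E_z})$ is continuous, so this strict inequality persists on an open neighborhood $U_z\subset\omega(x)$; by compactness a finite subcover $U_{z_1},\dots,U_{z_m}$ with times $\tau_{z_1},\dots,\tau_{z_m}$ suffices, and assigning these times gives a bounded function taking finitely many values.

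The main obstacle is precisely the demand that $\nu_{\kappa}$ be \emph{locally constant}. Since $\omega(x)$ is connected a genuinely locally constant function is constant, so the finite-cover construction must be reconciled to a single time $\tau$; one cannot just take $\tau=\max_j\tau_{z_j}$, because $t\mapsto m(D_z\Phi_t|_{E_z})$ need not be monotone. To overcome this I would upgrade the pointwise positivity of $\lambda_{kz}$ to a uniform exponential lower bound. The function $a(z,t)=\log m(D_z\Phi_t|_{E_z})$ is continuous and, by invariance of $E_z$ together with the submultiplicativity $m(BA)\ge m(B)m(A)$, superadditive: $a(z,t+s)\ge a(\Phi_t z,s)+a(z,t)$. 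Hence $t\mapsto\inf_{z}a(z,t)$ is superadditive and $c:=\lim_{t\to\infty}\frac1t\inf_{z\in\omega(x)}a(z,t)$ exists; a standard empirical-measure (subadditive ergodic) argument identifies this limit with $\inf_\mu\int\lambda_{kz}\,d\mu$ over invariant probability measures $\mu$ on $\omega(x)$, which is positive because $\lambda_{kz}>0$ everywhere. Therefore $m(D_z\Phi_t|_{E_z})\ge e^{ct/2}$ for all $z\in\omega(x)$ and all large $t$, and choosing one $\tau\ge T_1$ with $e^{c\tau/2}>4/\kappa$ makes both inequalities of (\ref{eq:nu-kappa}) hold simultaneously for every $z$; then $\nu_{\kappa}\equiv\tau$ is the desired (constant, hence locally constant and bounded) function.
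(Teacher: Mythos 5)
Your argument is correct, and its first three steps are essentially the paper's proof: the ratio bound $\norm{D_z\Phi_t w_F}/\norm{D_z\Phi_t w_E}\le M\gamma^t$ from Definition \ref{D:ES-separation}(iii) handles the first inequality uniformly for $t\ge T_1(\kappa)$, the condition $\lambda_{kz}>0$ produces a pointwise time $\tau_z$ at which $m(D_z\Phi_{\tau_z}|_{E_z})>4/\kappa$, and continuity of $z\mapsto m(D_z\Phi_{\tau}|_{E_z})$ (via smoothness of $\Phi$ and continuity of the bundle) plus compactness of $\omega(x)$ yields a bounded function taking finitely many values --- which is exactly what the paper means by ``locally constant'' and is all that is used later (e.g.\ only $m_\kappa=\max_z\nu_\kappa(z)$ and the validity of (\ref{eq:nu-kappa}) at each $z$ enter the proof of Lemma \ref{Distance-nonlinear-system}). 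Where you genuinely depart from the paper is in taking the phrase ``locally constant'' literally on the connected set $\omega(x)$ and upgrading to a single constant time $\tau$: your observation that one cannot simply take $\max_j\tau_{z_j}$ (since $t\mapsto m(D_z\Phi_t|_{E_z})$ need not be monotone) is a fair criticism of the statement as written, and your fix via superadditivity of $a(z,t)=\log m(D_z\Phi_t|_{E_z})$ and the semi-uniform (sub)additive ergodic theorem is sound. The one place to be careful there is the final positivity claim: the infimum of $\mu\mapsto\lim_t\frac1t\int a_t\,d\mu$ over invariant measures is positive not merely because $\lambda_{kz}>0$ pointwise, but because this functional is a supremum of weak$^*$-continuous functions (by superadditivity of $t\mapsto\int a_t\,d\mu$), hence lower semicontinuous, so the infimum is attained at some invariant $\mu_0$ and equals $\int\lambda_{kz}\,d\mu_0>0$. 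With that remark supplied, your version proves a slightly stronger and cleaner statement (a genuinely constant $\nu_\kappa$) at the cost of invoking heavier ergodic-theoretic machinery that the paper avoids.
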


\begin{proof} By the definition of $\lambda_{kz}$, for each $z\in \omega(x)$, there is a sequence $t_n\rightarrow+\infty$ such that
$$\norm{D_z\Phi_{t_n}w_E}>e^{\frac{\lambda_{kz}}{2}t_n}$$ for any $w_E\in E_z\cap S$. Furthermore, the definition of $k$-exponential separation
along $\Sigma$ indicates that there exist $M>0$ and $\gamma\in(0,1)$ such that $$\frac{\norm{D_z\Phi_{t}w_F}}{\norm{D_z\Phi_{t}w_E}}< M\gamma^t$$ for
any $t>0$ and $w_E\in E_z\cap S,\,w_F\in F_z\cap S$. Since $\lambda_{kz}>0$, one can find a $N_{\kappa}(z)>0$ such that
$$\frac{\norm{D_z\Phi_{t_n}w_F}}{\norm{D_z\Phi_{t_n}w_E}}<\frac{\kappa}{2(1+\kappa)}$$ and $$\norm{D_z\Phi_{t_n}w_E}>\frac{4}{\kappa}$$ for any
$t_n>N_{\kappa}(z)$ and $w_E\in E_z\cap S$.

Therefore, for each $z\in \omega(x)$, one can associate with a number $\nu_{\kappa}(z)\geq N_{\kappa}(z)$ such that (\ref{eq:nu-kappa}) holds for
any $w_E\in E_z\cap S$ and $z\in\omega(x)$. Moreover, together with the compactness of $\omega(x)$ and the smoothness of $\Phi_t$, one can
further take such $\nu_{\kappa}(z)$ as a locally constant (hence bounded) function. We complete the proof.
\end{proof}

\begin{lemma}\label{Distance-nonlinear-system} Assume that $\lambda_{kz}>0$ for any $z\in \omega(x)$. There exists a constant
$\delta^{\prime\prime}>0$ such that $$\limsup_{t\to +\infty}\norm{\Phi_t(y)-\Phi_t(x)}\ge \delta^{\prime\prime},$$ whenever $y$
satisfies $y\neq x$ and $y\thicksim x$.
\end{lemma}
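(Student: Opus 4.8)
The plan is to argue by contradiction, exploiting the interplay between the ordering, the strongly focusing condition, and the positivity of the $k$-Lyapunov exponents. Write $v_t=\Phi_t(y)-\Phi_t(x)$. Since $y\thicksim x$ with $y\ne x$ and $C$ is symmetric, strong monotonicity (Definition \ref{strongly focusing monotone}(i)) gives $v_t\in\tint C\setminus\{0\}$ for all $t>0$. Suppose, for contradiction, that $\limsup_{t\to+\infty}\norm{v_t}<\delta^{\prime\prime}$ for a constant $\delta^{\prime\prime}>0$ still to be fixed; then there is $t_0$ with $\norm{v_t}<\delta^{\prime\prime}$ for all $t\ge t_0$, and (enlarging $t_0$) $\Phi_t(x)$ lies within $\delta/2$ of $\omega(x)$, where $\delta,T,\kappa$ are the constants attached to the compact invariant set $\Sigma=\omega(x)$ in Definition \ref{strongly focusing monotone}(ii). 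Taking $\delta^{\prime\prime}\le\delta/2$ ensures $\Phi_t(x),\Phi_t(y)\in B_{\delta}(z)$ for a suitable $z\in\omega(x)$, so the focusing operators are available along the orbit. All estimates below involve only constants attached to $\Sigma=\omega(x)$, so the eventual $\delta^{\prime\prime}$ will be independent of $y$.

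First I would turn the ordering into a quantitative alignment with the expanding bundle. Applying the focusing operator over the time $T$ gives $v_{t+T}=T_{\Phi_t(x),\Phi_t(y)}\,v_t$ exactly, and since $v_t\in C$ and $T_{\Phi_t(x),\Phi_t(y)}$ has separation index at least $\kappa$, the unit vector $\hat v:=v_{t+T}/\norm{v_{t+T}}$ satisfies $d(\hat v,\,X\setminus C)\ge\kappa$. This is where the infinite-dimensional difficulty is overcome: for any $z\in\omega(x)$ the component $Q_z\hat v\in F_z$ is, when nonzero, never in $C$ (as $F_z\cap C=\{0\}$ by Definition \ref{D:ES-separation}(iv)), whence $d(\hat v,X\setminus C)\le\norm{\hat v-Q_z\hat v}=\norm{P_z\hat v}$; consequently $\norm{P_z v_{t+T}}\ge\kappa\norm{v_{t+T}}$ for \emph{every} $z\in\omega(x)$. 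Thus the focusing condition forces the ordered difference deep into $C$, which in turn forces a definite $E$-component — precisely the step that would fail if one tried to bound $\norm{P_z v}/\norm{Q_z v}$ directly, since unit vectors of $F_z$ may approach $\partial C$.

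Next I would feed this $E$-dominance into the expansion supplied by Lemma \ref{expand-in-Ez}. Pick $z'\in\omega(x)$ close to $\Phi_{t+T}(x)$ and split $v_{t+T}=P_{z'}v_{t+T}+Q_{z'}v_{t+T}$; using $\norm{P_{z'}v_{t+T}}\ge\kappa\norm{v_{t+T}}$, the uniform bound $\norm{Q_{z'}}\le C_0$ from Lemma \ref{L:ES-Vs-Cone}(i), and applying Lemma \ref{expand-in-Ez} with a parameter $\mu>0$ chosen small, the operator $D_{z'}\Phi_{\nu}$ (with $\nu=\nu_\mu$ bounded, indeed constant on the connected set $\omega(x)$) expands the $E$-part by a factor exceeding $4/\mu$ while its image of the $F$-part is only a small fraction of that, yielding a linear growth factor $\lambda_0=\lambda_0(\mu)>1$ that can be made arbitrarily large by shrinking $\mu$. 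Transferring from $z'$ to the true base point $\Phi_{t+T}(x)$ costs only a small error by continuity of $w\mapsto D_w\Phi_{\nu}$ on the compact set $\overline{O^+(x)}$ (the time $\nu$ being bounded), and controlling the nonlinear remainder through the $C^{1,\alpha}$-estimate $\norm{\Phi_\nu(\Phi_{t+T}(y))-\Phi_\nu(\Phi_{t+T}(x))-D_{\Phi_{t+T}(x)}\Phi_\nu v_{t+T}}\le L\norm{v_{t+T}}^{1+\alpha}$ shows, after finally choosing $\delta^{\prime\prime}$ small, that $\norm{v_{t+T+\nu}}\ge\lambda_1\norm{v_t}$ for a fixed $\lambda_1>1$ and all $t\ge t_0$. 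Here one also uses that over the fixed time $T$ the invertible derivatives $D_z\Phi_T$ ($z\in\omega(x)$, flow extension) are uniformly bounded below, so the focusing substep loses at most a fixed factor, which the tunable $\lambda_0$ absorbs.

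Finally I would iterate. Setting $t_{n+1}=t_n+T+\nu$ and applying the one-step estimate at each $t_n\ge t_0$ yields $\norm{v_{t_n}}\ge\lambda_1^{\,n}\norm{v_{t_0}}\to+\infty$, contradicting $\norm{v_t}<\delta^{\prime\prime}$ for $t\ge t_0$ (note $\norm{v_{t_0}}>0$ since $v_{t_0}\in\tint C\setminus\{0\}$). Hence $\limsup_{t\to+\infty}\norm{v_t}\ge\delta^{\prime\prime}$. The hardest point is the careful bookkeeping of the order in which the constants are fixed — $\kappa,C_0$ from the system, then $\mu$ (hence $\nu$ and the expansion factor), then $\delta^{\prime\prime}$ — so that the resulting $\delta^{\prime\prime}$ depends only on $\omega(x)$ and not on the perturbation $y$; the conceptual crux, however, is the second paragraph, where the strongly focusing hypothesis converts ``ordered'' into a uniform lower bound on the projection onto the expanding bundle, the estimate that replaces the compactness argument available only in the finite-dimensional setting.
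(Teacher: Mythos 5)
Your strategy coincides with the paper's own proof: argue by contradiction, apply the strongly focusing operators over the time step $T$ to force the normalized difference $v_{t+T}/\norm{v_{t+T}}$ a distance at least $\kappa$ from $X\setminus C$, convert this into the uniform projection bound $\norm{P_z v_{t+T}}\ge \kappa\norm{v_{t+T}}$ (your second paragraph is exactly the paper's key display, and your identification of this as the step replacing the finite-dimensional compactness argument is exactly the paper's stated motivation), feed that into Lemma \ref{expand-in-Ez}, absorb the nonlinear remainder by shrinking $\delta^{\prime\prime}$, and iterate to contradict boundedness.

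There is, however, one step whose justification fails as written: the claim that ``over the fixed time $T$ the invertible derivatives $D_z\Phi_T$ are uniformly bounded below, so the focusing substep loses at most a fixed factor.'' Under the paper's standing assumption that $D_x\Phi_t$ is \emph{compact}, $D_z\Phi_T$ cannot be bounded below on an infinite-dimensional $X$ (a compact operator bounded below on $X$ would make the unit ball compact); the flow extension gives invertibility of $\Phi_T$ restricted to $\omega(x)$, not of its derivative on $X$. Since your one-step estimate composes a $T$-substep with a $\nu$-substep, an uncontrolled contraction over the $T$-substep cannot be absorbed by the tunable $\lambda_0$, which must be fixed before the iteration begins. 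The gap is repairable --- using the invariance $D_z\Phi_T E_z=E_{\Phi_T(z)}$, $D_z\Phi_T F_z\subset F_{\Phi_T(z)}$ one gets $\norm{D_z\Phi_T v}\ge \norm{D_z\Phi_T P_z v}/\norm{P_{\Phi_T(z)}}\ge m(D_z\Phi_T|_{E_z})\,\kappa\norm{v}/\sup_{z'}\norm{P_{z'}}$, and $m(D_z\Phi_T|_{E_z})$ is uniformly positive on the compact $\omega(x)$ because the restriction to the $k$-dimensional fibre $E_z$ is a bijection onto $E_{\Phi_T(z)}$ --- but the paper sidesteps the issue entirely: once the orbits are trapped in the $\delta^{\prime\prime}$-tube, \emph{every} time $\tau>N_1+T$ is $T$ units after a time still inside the tube, so the projection bound holds at every such $\tau$, and the iteration $\tau_{n+1}=\tau_n+\nu_{\kappa}(z_{\tau_n})$ uses only the expansion substeps, never passing the norm estimate through the $T$-substep. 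You should either adopt that bookkeeping or replace your lower bound on $D_z\Phi_T$ by the fibrewise argument above.
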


\begin{proof} Since $\Phi_t$ is strongly focusing monotone w.r.t. $C$, one can find constants $\delta,T,\kappa>0$ such that there exists a strongly focusing operator $T_{x,y}$ with separation index greater than $\kappa$ such that $T_{x,y}(y-x)=\Phi_T(y)-\Phi_T(x)$ for any $z\in \omega(x)$ and $x,y\in B_{\delta}(z)$, where $B_{\delta}(z)$ is the ball centred at $z$ with radius $\delta$.

For any given $y\in X$ such that $y\neq x$ and $y\thicksim x$, let $\tilde{y}_{t}=\Phi_{t}(y)$ and
$\tilde{x}_{t}=\Phi_{t}(x)$ for $t>0$. Since $\omega(x)$ attracts $x$, one can take a curve $\{z_t\}_{t>0}\subset \omega(x)$ such that
\begin{equation}\lim\limits_{t\rightarrow\infty}\norm{\tilde{x}_t-z_t}=0.\end{equation} Hence, there exists a $\tilde{T}_y>T$ such that $\norm{\tilde{x}_{t-T}-z_{t-T}}<\frac{\delta}{2}$ for any $t>\tilde{T}_y$. Moreover, if $\tilde{y}_{t-T}\in B_{\frac{\delta}{2}}(z_{t-T})$ for some $t>\tilde{T}_y$, then there is a strongly focusing operator $T_{\tilde{x}_{t-T},\tilde{y}_{t-T}}$ with separation index greater than $\kappa$ such that $$\tilde{y}_t-\tilde{x}_t=T_{\tilde{x}_{t-T},\tilde{y}_{t-T}}(\tilde{y}_{t-T}-\tilde{x}_{t-T}).$$ By Lemma \ref{P:Cones-imply-ES}, $(\Phi_t, D\Phi_t)$ admits a $k$-exponential separation along $\omega(x)$ associated with $C$. Denoted by $\omega(x)\times (E_z)$ ($\omega(x)\times (F_z)$) the corresponding $k$-dimensional invariant subbundle ($k$-codimensional subbundle), respectively. Recall that $P_z$ is the projection onto $E_z$ along $F_z$ and $Q_z=I-P_z$. Clearly, $d(\frac{\tilde{y}_t-\tilde{x}_t}{\norm{\tilde{y}_t-\tilde{x}_t}},X\setminus C)\geq\kappa$ and hence,
\begin{equation}\label{QP-ratio-bound}\begin{aligned}&\frac{\norm{P_{z_t}(\tilde{y}_t-\tilde{x}_t)}}{\norm{\tilde{y}_t-\tilde{x}_t}}\geq\kappa,\\
&\frac{\norm{Q_{z_t}(\tilde{y}_t-\tilde{x}_t)}}{\norm{P_{z_t}(\tilde{y}_t-\tilde{x}_t)}}\leq\frac{1+\kappa}{\kappa}.\end{aligned}\end{equation}

Let $\nu_{\kappa}$ be the constant function
mentioned in Lemma \ref{expand-in-Ez} and $m_{\kappa}=\max\limits_{z\in\omega(x)}\{\nu_{\kappa}(z)\}$. By the smoothness of $\Phi_t$,
there is a $\delta^{''}\in (0,\frac{\delta}{2})$ such that \begin{equation}\label{linear-perturbation}\norm{D_{y_1}\Phi_t-D_{y_2}\Phi_t}
<\frac{1}{2}\end{equation} for any $t\in[0,m_{\kappa}]$ and $y_1,y_2\in \mathcal{B}_{\delta}(\omega(x))$ satisfying $\norm{y_1-y_2}<2\delta^{''}$.

Now, we will prove that $\delta^{''}$ is the desired constant. Prove by contrary. Suppose that
$$\limsup\limits_{t\rightarrow+\infty}\norm{\tilde{y}_t-\tilde{x}_t}<\delta^{''}$$ for some $y\in X$ satisfying $y\neq x$ and $y\thicksim x$.
Then, one can find a $N_1>\tilde{T}_y$ such that
$$\norm{\tilde{y}_t-\tilde{x}_t}<\delta^{''}\,\,\text{and}\,\,\norm{\tilde{x}_t-z_t}<\delta^{''}$$ for any
$t\geq N_1$. Hence, $\tilde{y}_t,\tilde{x}_{t}\in \mathcal{B}_{\delta}(\omega(x))$ and (\ref{QP-ratio-bound}) hold for any $t>N_1+T$.
Take $\tau_1\geq N_1+T$ and $\tau_{n+1}=\tau_{n}+\nu_{\kappa}(z_{\tau_n})$ for $n=1,2,\cdots$. Denoted by $y_{\tau_n}=\Phi_{\tau_n}(y)$,
$x_{\tau_n}=\Phi_{\tau_n}(x)$ and $P_{\tau_n}=P_{z_{\tau_n}}$, $Q_{\tau_n}=Q_{z_{\tau_n}}$. Then, one has
$$\begin{aligned}\norm{D_{z_{\tau_n}}\Phi_{\nu_{\kappa}(z_{\tau_n})}(y_{\tau_n}-x_{\tau_n})}&\geq
\norm{D_{z_{\tau_n}}\Phi_{\nu_{\kappa}(z_{\tau_n})}P_{\tau_n}(y_{\tau_n}-x_{\tau_n})}\cdot\big [1-\frac{\norm{D_{z_{\tau_n}}
\Phi_{\nu_{\kappa}(z_{\tau_n})}Q_{\tau_n}(y_{\tau_n}-x_{\tau_n})}}{\norm{D_{z_{\tau_n}}\Phi_{\nu_{\kappa}(z_{\tau_n})}
P_{\tau_n}(y_{\tau_n}-x_{\tau_n})}}\big]\\
&\overset{(\ref{eq:nu-kappa})}{\geq}\norm{D_{z_{\tau_n}}\Phi_{\nu_{\kappa}(z_{\tau_n})}P_{\tau_n}(y_{\tau_n}-x_{\tau_n})}\cdot[1-\frac{\kappa}
{2(1+\kappa)}\cdot\frac{\norm{Q_{\tau_n}(y_{\tau_n}-x_{\tau_{n}})}}{\norm{P_{\tau_n}(y_{\tau_n}-x_{\tau_{n}})}}]\\
&\overset{(\ref{QP-ratio-bound})}{\geq}\norm{D_{z_{\tau_n}}\Phi_{\nu_{\kappa}(z_{\tau_n})}P_{\tau_n}(y_{\tau_n}-x_{\tau_n})}\cdot[1-\frac{\kappa}
{2(1+\kappa)}\cdot\frac{1+\kappa}{\kappa}]\\
&=\frac{1}{2}\norm{D_{z_{\tau_n}}\Phi_{\nu_{\kappa}(z_{\tau_n})}P_{\tau_n}(y_{\tau_n}-x_{\tau_n})}\\
&\overset{(\ref{eq:nu-kappa})}{\geq}\frac{2}{\kappa}\norm{P_{\tau_n}(y_{\tau_n}-x_{\tau_n})}\\
&\overset{(\ref{QP-ratio-bound})}{\geq}2\norm{y_{\tau_n}-x_{\tau_n}}\end{aligned}$$ for any $n>0$.
Notice that $$y_{\tau_{n+1}}-x_{\tau_{n+1}}=D_{z_{\tau_n}}\Phi_{\nu_{\kappa}(z_{\tau_n})}(y_{\tau_n}-x_{\tau_n})
+\int_{0}^{1}[D_{x_{\tau_n}+s(y_{\tau_n}-x_{\tau_n})}\Phi_{\nu_{\kappa}(z_{\tau_n})}
-D_{z_{\tau_n}}\Phi_{\nu_{\kappa}(z_{\tau_n})}]ds\cdot(y_{\tau_n}-x_{\tau_n})$$ for any $n>0$.
Together with (\ref{linear-perturbation}), one has $$\norm{y_{\tau_{n+1}}-x_{\tau_{n+1}}}\geq \frac{3}{2}\cdot
\norm{y_{\tau_{n}}-x_{\tau_{n}}}$$ for any $n>0$. Hence, $\lim\limits_{n\rightarrow\infty}\norm{y_{\tau_n}-x_{\tau_n})}=\infty$, a contradiction.

Therefore, we complete the proof.
\end{proof}

\begin{theorem}\label{omega-lambdak>0} Let $\mathcal{D}$ be an open $\omega$-compact set and $x\in \mathcal{D}$. If $\lambda_{kz}>0$ for any $z\in \omega(x)$, then we have $x\in \overline{Q}$.
\end{theorem}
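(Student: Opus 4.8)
The plan is to argue by contradiction. Suppose $x\notin\overline{Q}$; since $\mathcal{D}$ is open this gives an open neighborhood $V\subset\mathcal{D}$ of $x$ with $V\cap Q=\emptyset$, so every $w\in V$ has an unordered (Type-II) semiorbit and, as $V\subset\mathcal{D}$, a precompact one. Because $C$ is $k$-solid, I first fix a $k$-dimensional subspace $W$ with $W\setminus\{0\}\subset\mathrm{Int}\,C$ and a unit vector $v\in W$, and set $y_j=x+\tfrac1j v$. For $j$ large $y_j\in V$ and $y_j-x\in\mathrm{Int}\,C$, so $y_j\thickapprox x$; by strong monotonicity $\Phi_t(y_j)\thickapprox\Phi_t(x)$ for all $t>0$. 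Thus I have a family of strongly ordered perturbations of $x$, converging to $x$, whose orbits I will play off against $O^+(x)$.

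Next I feed each ordered pair $(x,y_j)$ into Lemma \ref{Distance-nonlinear-system}, which under the hypothesis $\lambda_{kz}>0$ on $\omega(x)$ yields the instability bound $\limsup_{t\to+\infty}\norm{\Phi_t(y_j)-\Phi_t(x)}\ge\delta^{\prime\prime}$. Let $s_j$ be the first time with $\norm{\Phi_{s_j}(y_j)-\Phi_{s_j}(x)}=\delta^{\prime\prime}/2$; since $y_j\to x$ forces $\Phi_t(y_j)\to\Phi_t(x)$ uniformly on compact $t$-intervals, $s_j\to\infty$, and by minimality of $s_j$ one has $\norm{\Phi_t(y_j)-\Phi_t(x)}\le\delta^{\prime\prime}/2$ with $\Phi_t(y_j)\thickapprox\Phi_t(x)$ throughout $[0,s_j]$. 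Using precompactness of $\overline{O^+(V)}$ I pass to a subsequence so that $\Phi_{s_j}(x)\to a$ and $\Phi_{s_j}(y_j)\to b$. Since $s_j\to\infty$ we get $a\in\omega(x)$, while closedness of $C$ gives $b-a\in C$ and $\norm{b-a}=\delta^{\prime\prime}/2$, so $b\thicksim a$ and $b\ne a$.

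The decisive step is to upgrade this limiting weak order into a genuine self-ordering on one semiorbit lying in $V$. Here the $k$-exponential separation along $\omega(x)$ enters: by the projection estimates in Lemma \ref{L:ES-Vs-Cone} together with Lemma \ref{vector-in-C-farawayfrom-V1-delta}, and exactly the ratio control (\ref{QP-ratio-bound}) already used in the proof of Lemma \ref{Distance-nonlinear-system}, the normalized difference $\big(\Phi_t(y_j)-\Phi_t(x)\big)/\norm{\Phi_t(y_j)-\Phi_t(x)}$ aligns with the $k$-dimensional unstable bundle $E\subset\mathrm{Int}\,C$ over $\omega(x)$ and stays a definite distance (at least the separation index $\kappa$) from $\partial C$. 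Consequently the limiting difference satisfies $b-a\in\mathrm{Int}\,C$, i.e. $b\thickapprox a$ \emph{strongly}. I then argue that the $F$-component, which alone could carry $b$ off $\omega(x)$, is asymptotically negligible, so that the orbit through $b$ can be forced into $\omega(x)$ and $\omega(x)$ carries the strongly ordered pair $a\thickapprox b$. Finally, since $O^+(x)$ recurs into every neighborhood of $a$ and of $b$, the openness of $\mathrm{Int}\,C$ produces times $\tau<\sigma$ with $\Phi_\sigma(x)\thickapprox\Phi_\tau(x)$, whence $x\in Q$ — contradicting $x\notin\overline{Q}$.

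The hard part will be precisely this last upgrade: turning the mutual ordering of the two \emph{distinct} orbits $O^+(x)$ and $O^+(y_j)$ into a self-ordering of a \emph{single} orbit near $x$, while coping with the fact that $C$ is closed, so naive passage to the limit yields only the weak relation $\thicksim$. The two technical hurdles are (a) recovering the strong relation $b\thickapprox a$ in the limit, which I expect to handle via the exponential-separation/strong-focusing machinery that keeps the difference vector uniformly inside $\mathrm{Int}\,C$, and (b) locating the second comparison point genuinely on $\omega(x)$ (equivalently, on $O^+(x)$) rather than merely within distance $\delta^{\prime\prime}/2$ of it; this is where the alignment with the unstable bundle $E$ must be exploited quantitatively. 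Everything else is bookkeeping with precompactness, the flow extension on $\omega(x)$, and continuity of $\Phi_t$.
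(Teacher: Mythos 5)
Your skeleton is the right one, and up to the last step it matches the argument the paper is deferring to (the proof here is purely by citation of \cite[Theorem 4.5]{F-W-W2}): perturb along $\text{Int}\,C$ to get $y_j\thickapprox x$ with $y_j\to x$, use Lemma \ref{Distance-nonlinear-system} to produce first separation times $s_j\to\infty$ with $\norm{\Phi_{s_j}(y_j)-\Phi_{s_j}(x)}=\delta''/2$, and pass to limits $a\in\omega(x)$ and $b$ with $\norm{b-a}=\delta''/2$ and, via the focusing estimate behind (\ref{QP-ratio-bound}), $b-a\in\text{Int}\,C$. The genuine gap is your endgame. You need $b\in\omega(x)$ so that $O^+(x)$ recurs near both $a$ and $b$, but nothing delivers this: $b$ is a limit of points $\Phi_{s_j}(y_j)$ taken along \emph{different} orbits, so it lies only in the prolongational limit set of $x$, at distance at most $\delta''/2$ from $\omega(x)$, not on it. Your proposed repair --- that the $F$-component is asymptotically negligible so $b$ ``can be forced into $\omega(x)$'' --- points the wrong way: the focusing estimate shows precisely that $b-a$ has an $E$-component of definite size (at least $\kappa\delta''/2$), i.e.\ $b$ is genuinely displaced from $a$ in the unstable cone directions, and no mechanism returns it to $\omega(x)$. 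Moreover your final step would prove $x\in Q$, which is strictly stronger than the asserted conclusion $x\in\overline{Q}$ and should not be expected to hold for $x$ itself.

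The fix is to order two points of a \emph{single} nearby orbit $O^+(y_j)$ rather than two points of $\omega(x)$. Choose $\rho<\delta''/4$ so small that $u-w\in\text{Int}\,C$ for all $u\in B_{\rho}(b)$, $w\in B_{\rho}(a)$. Since $a\in\omega(x)$, fix one finite time $\tau$ with $\norm{\Phi_{\tau}(x)-a}<\rho/2$; because $\tau$ is fixed and $y_j\to x$, continuity of $\Phi_{\tau}$ gives $\Phi_{\tau}(y_j)\in B_{\rho}(a)$ for all large $j$, while $\Phi_{s_j}(y_j)\in B_{\rho}(b)$ and $s_j>\tau$ by construction. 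Hence $\Phi_{s_j}(y_j)-\Phi_{\tau}(y_j)\in\text{Int}\,C$ with the two points distinct, so $y_j\in Q$, and $y_j\to x$ yields $x\in\overline{Q}$; no information about the location of $b$ relative to $\omega(x)$ is needed. Two smaller points: extracting the convergent subsequence $\Phi_{s_j}(y_j)\to b$ requires precompactness of the positive orbit of a neighborhood (as in Theorem A), not merely $\omega$-compactness, so you should say which hypothesis you are invoking; and the upgrade to $b-a\in\text{Int}\,C$ should be run directly off the separation index $\kappa$ of the focusing operators (a unit vector at distance $\ge\kappa$ from $X\setminus C$ is interior), since Lemma \ref{L:ES-Vs-Cone}(ii) alone need not apply --- its constant $C_1$ has no reason to dominate the ratio $(1+\kappa)/\kappa$ appearing in (\ref{QP-ratio-bound}).
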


\begin{proof} See \cite[Theorem 4.5]{F-W-W2}. We here point out that all arguments in \cite[Theorem 4.5]{F-W-W2} still remain valid for  semiflow $\Phi_t$ strongly focusing monotone w.r.t. $C$ on an infinite dimensional Banach space.
\end{proof}

We now consider the case that $\lambda_{k\tilde{z}}>0$ for any $\tilde{z}\in \omega_0(x)$ and $\lambda_{kz}\leq 0$ for some $z\in \omega(x)\setminus\omega_0(x)$.
\begin{theorem}\label{regular-lambdak>0} Let $\mathcal{D}$ be an open $\omega$-compact set and $x\in \mathcal{D}$. Assume that $\lambda_{k\tilde{z}}>0$ for any $\tilde{z}\in \omega_0(x)$. If there exists some $z\in\omega(x)\setminus \omega_0(x)$ such that $\lambda_{kz}\leq 0$, then $x\in \overline{Q}$.
\end{theorem}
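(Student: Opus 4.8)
The plan is to argue by contradiction and thereby reduce the situation to the instability mechanism behind Theorem \ref{omega-lambdak>0}, with the irregular point $z$ supplying exactly the failure of uniform expansion that allows pseudo-ordered behaviour to be manufactured. Suppose $x\notin\overline{Q}$, so there is a neighborhood $U$ of $x$ all of whose points have unordered semiorbits. By Lemma \ref{P:Cones-imply-ES} the skew-product semiflow admits a $k$-exponential separation $X=E_w\oplus F_w$ along $\omega(x)$ associated with $C$, and by hypothesis $\lambda_{kw}>0$ for every regular $w\in\omega_0(x)$ while $\lambda_{kz}\le0$ at the irregular $z$. The role of strong focusing (Definition \ref{strongly focusing monotone}(ii)) is, following the infinite-dimensional innovation of this paper, to control the projections $P_w,Q_w$ not on arbitrary cone vectors but on \emph{differences} $\Phi_t(y)-\Phi_t(x)$ of ordered pairs: together with Lemma \ref{L:ES-Vs-Cone} and Lemma \ref{vector-in-C-farawayfrom-V1-delta} this keeps such a difference uniformly inside $C$ with $\norm{Q_w(\cdot)}\lesssim\norm{P_w(\cdot)}$, so that its growth is governed by the $E$-bundle alone.

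Next I would perturb along the cone: pick $v\in\operatorname{Int}C$ and set $y_\eps=x+\eps v$, so that $y_\eps\thickapprox x$, $y_\eps\to x$ as $\eps\to0$, and by monotonicity $\Phi_t(y_\eps)\thicksim\Phi_t(x)$ for all $t\ge0$; for small $\eps$ the point $y_\eps\in U$ has an unordered semiorbit. Writing $w_t=\Phi_t(y_\eps)-\Phi_t(x)\in C\setminus\{0\}$, the crucial contrast with the case of Theorem \ref{omega-lambdak>0} is that Lemma \ref{expand-in-Ez}, and hence the uniform separation of ordered pairs in Lemma \ref{Distance-nonlinear-system}, \emph{fails} at $z$ precisely because $\lambda_{kz}\le0$. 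Exploiting $z\in\omega(x)$, choose $s_m\to\infty$ with $\Phi_{s_m}(x)\to z$; along the subsequence realizing $\limsup_{t}\tfrac1t\log m(D_z\Phi_t|_{E_z})\le0$ the $E$-expansion of $w_t$ is at most subexponential, so the difference cannot run away and, after passing to a further subsequence, $\Phi_{t_n}(y_\eps)$ and $\Phi_{t_n}(x)$ coalesce to a common limit $w\in\omega(x)$. Lemma \ref{co-limit} then forces $w\in Q$ or $w\in E$.

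To finish I would rule out the equilibrium alternative using the sign hypothesis itself: any equilibrium in $\omega(x)$ is regular, hence carries $\lambda_k>0$ and is therefore unstable in the dominant $E$-directions, which is incompatible with an ordered pair collapsing onto it; thus $w\in Q$. Tracking the coalescence back along the two ordered semiorbits, both of which return arbitrarily close to $z$ while $w_t$ stays inside $C$ with controlled projection proportions, produces two genuinely ordered points on $O^+(y_\eps)$, whence $y_\eps\in Q$, contradicting $y_\eps\in U$. Letting $\eps\to0$ then gives $x\in\overline{Q}$.

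I expect the decisive obstacle to be the step where regularity is lost. The clean exponential contraction of the $F$-bundle in Lemma \ref{E-S-and-Lya-exponent}(ii) is available only at regular points, whereas all the usable non-expansion here lives at the irregular $z$ and is merely a $\limsup$-statement along a sparse sequence of times. Converting this non-uniform, limsup-type subexponential behaviour into an honest coalescence of an ordered pair, and then upgrading that coalescence in the $\omega$-limit into two ordered points on the \emph{finite} semiorbit $O^+(y_\eps)$, is the delicate heart of the matter; moreover, since in a Banach space one cannot invoke compactness of the unit ball, the requisite boundedness of $w_t$ inside $C$ must be extracted purely from the strongly focusing estimate on $P_w,Q_w$. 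This is exactly the point at which the passage from \cite{F-W-W2} to the present infinite-dimensional setting has to be carried out with care.
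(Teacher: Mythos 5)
There is a genuine gap at the step your own sketch identifies as ``the delicate heart of the matter'': the passage from $\limsup_{t\to\infty}\frac1t\log m(D_z\Phi_t|_{E_z})\le 0$ at the single irregular point $z$ to an actual coalescence $\norm{\Phi_{t_n}(y_\eps)-\Phi_{t_n}(x)}\to 0$ of the ordered pair. Subexponential growth of the $E$-component does not imply decay to zero; the difference $w_t=\Phi_t(y_\eps)-\Phi_t(x)$ can perfectly well stay bounded and bounded away from $0$, in which case Lemma \ref{co-limit} never becomes applicable and the contradiction never materializes. Worse, the estimate $\lambda_{kz}\le 0$ concerns the linearization $D_z\Phi_t$ along the forward orbit \emph{of $z$} as $t\to\infty$, whereas $w_t$ is governed by the mean-value operators along the orbit \emph{of $x$}; since $z$ is irregular, one has no uniformity (no analogue of Lemma \ref{E-S-and-Lya-exponent}(ii)) with which to transfer the non-expansion at $z$ to the times at which $\Phi_t(x)$ visits a neighborhood of $z$ and then to propagate it over the unbounded time intervals in between. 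The auxiliary step ruling out the equilibrium alternative is likewise only asserted: Lemma \ref{Distance-nonlinear-system} is proved under the hypothesis that $\lambda_{kw}>0$ for \emph{all} $w\in\omega(x)$, so it cannot be invoked here without a separate localized argument at the equilibrium.

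The paper's route is different in its decisive ingredient. Its proof of Theorem \ref{regular-lambdak>0} consists of observing that the flow extension on $\omega(x)$ yields a continuous tangent vector field $y\mapsto v_y=\frac{d}{dt}\big|_{t=0}\Phi_t(y)$ on $\omega(x)$, and then deferring to the argument of \cite[Theorem 4.6]{F-W-W2}: the point is that $D_y\Phi_t v_y=v_{\Phi_t(y)}$ is bounded above and below along orbits in $\omega(x)$, and comparing this bounded invariant direction with the exponential separation $X=E_y\oplus F_y$ and with the sign of $\lambda_k$ at regular versus irregular points is what produces ordered pairs near $x$. This mechanism --- the flow direction as a distinguished invariant vector --- is entirely absent from your proposal, and it is precisely what substitutes for the coalescence you could not establish. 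To repair your argument you would either have to import that tangent-vector analysis or supply a genuinely new proof that an ordered pair near $x$ must eventually become ordered along a single semiorbit; as written, the proposal does not close.
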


\begin{proof} Since $\Phi_t$ admits a flow extension on $\omega(x)$, one can define a vector $v_y= \frac{d}{dt}\mid_{t=0}\Phi_t(y)\in X$ for any $y\in \omega(x)$. The smoothness of $\Phi_t$ implies that the map $y:\mapsto v_y$ is continuous. Hence, all arguments in \cite[Theorem 4.6]{F-W-W2} are still effective for semiflow $\Phi_t$ strongly focusing monotone with respect to $C$ on an infinite dimensional Banach space.
\end{proof}

Before discussing the case that $\lambda_{kz}\leq 0$ for a regular point $z\in\omega_0(x)$, we present the following lemma, which describes the nonlinear dynamics nearby a regular point.

\begin{lemma}\label{oribit-lambdak<0} Let $x$ be a regular point. If $\lambda_{kx}\leq 0$, then there exists an open neighborhood $\mathcal{V}$
of $x$ such that for any $y\in \mathcal{V}$, one of two following properties holds:
\vskip 1mm
$(a)$ $\norm{\Phi_{t}(x)-\Phi_{t}(y)}\rightarrow 0$ as $t\rightarrow +\infty$;

\vskip 2mm
$(b)$ There exists a $T>0$ such that $\Phi_{T}(x)-\Phi_{T}(y)\in  C$; an hence, $\Phi_{t}(x)-\Phi_{t}(y)\in \text{Int}\, C$ for any $t>T$.
\end{lemma}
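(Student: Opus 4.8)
The plan is to treat the orbit difference $w_t:=\Phi_t(y)-\Phi_t(x)$ as the trajectory of a small nonlinear perturbation of the linear cocycle $D_x\Phi_t$ along $\{z_t\}$, $z_t:=\Phi_t(x)$, and to run a cone-trapping dichotomy driven by the $k$-exponential separation $X=E_{z}\oplus F_{z}$ with projections $P_z,Q_z$. By Lemma \ref{L:ES-Vs-Cone}(ii) the ``inner cone'' $K(z)=\{w\in X:\norm{P_z w}\ge C_1\norm{Q_z w}\}$ satisfies $K(z)\setminus\{0\}\subset\text{Int}\,C\subset C$, so $F$-dominance of $w_t$ is incompatible with $w_t$ being ordered. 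Since $x$ is regular with $\lambda_{kx}\le 0$, Lemma \ref{E-S-and-Lya-exponent}(ii) supplies $\beta\in(\gamma,1)$ and the $F$-contraction $\norm{D_{z_{t_1}}\Phi_{t_2}w}\le C_{\eps}e^{\eps t_1}\beta^{t_2}\norm{w}$, which is the engine of branch $(a)$. The dichotomy I would use is: either $w_T\in C\setminus\{0\}$ for some $T>0$, or $w_t\notin C$ for all $t>0$ (the degenerate case $w_t=0$ for some $t$ is handled first: forward determinism then gives $w_s=0$ for $s\ge t$, so $(a)$ holds trivially, and I assume henceforth $w_t\neq 0$ for all $t$).

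First, branch $(b)$. Suppose $w_T\in C\setminus\{0\}$ for some $T>0$; note this covers the hypothesis $\Phi_T(x)-\Phi_T(y)\in C$ of $(b)$ since $C=-C$. Then $\Phi_T(x)\neq\Phi_T(y)$ and $\Phi_T(x)\thicksim\Phi_T(y)$, so strong monotonicity (Definition \ref{strongly focusing monotone}(i)) yields $\Phi_{T+s}(x)-\Phi_{T+s}(y)\in\text{Int}\,C$ for every $s>0$, i.e. $\Phi_t(x)-\Phi_t(y)\in\text{Int}\,C$ for all $t>T$. This is exactly $(b)$. The crucial point is that past the ordering time I argue with strong monotonicity rather than linear estimates, so no infinite-time smallness of $w_t$ is required here.

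Branch $(a)$ is the main work. If $w_t\notin C$ for all $t>0$, then in particular $w_t\notin K(z_t)$, so $\norm{P_{z_t}w_t}<C_1\norm{Q_{z_t}w_t}$, whence $\norm{w_t}\le(C_1+1)\norm{Q_{z_t}w_t}$ and it suffices to prove $\norm{Q_{z_t}w_t}\to 0$. Fixing a step $\tau>0$ and writing $w_n:=\Phi_{n\tau}(y)-\Phi_{n\tau}(x)$, the mean-value representation gives the recursion $w_{n+1}=\big(D_{z_{n\tau}}\Phi_{\tau}+R_n\big)w_n$ with a remainder bounded by $\norm{R_n}\le L\norm{w_n}^{\alpha}$ (from $C^{1,\alpha}$-smoothness on a bounded neighborhood). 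Since the splitting is invariant, $Q_{z_{(n+1)\tau}}D_{z_{n\tau}}\Phi_{\tau}=D_{z_{n\tau}}\Phi_{\tau}Q_{z_{n\tau}}$, so a Duhamel/variation-of-constants estimate combines the $F$-contraction of Lemma \ref{E-S-and-Lya-exponent}(ii) with the higher-order remainders to yield, for $\mathcal V$ small enough, a geometric bound $\norm{Q_{z_{n\tau}}w_n}\le\theta^{\,n}\norm{Q_x(y-x)}$ with $\theta<1$; hence $\norm{w_t}\to 0$, which is $(a)$.

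The main obstacle lies entirely in branch $(a)$: for a regular (Oseledets) point the $F$-contraction is only \emph{subexponentially non-uniform}, carrying the factor $e^{\eps t_1}$ in Lemma \ref{E-S-and-Lya-exponent}(ii), so a naive per-step contraction constant degrades along the orbit and must be overcome by the geometric decay of $w_t$ itself. I would resolve this either by passing to an adapted (Lyapunov) norm in which $F$ contracts uniformly per step, or by choosing $\eps$ with $\beta e^{\eps\tau}<1$ and shrinking $\mathcal V$ so that the accumulated Hölder remainders $\sum_j e^{\eps j\tau}\beta^{(n-j)\tau}\norm{w_j}^{1+\alpha}$ stay dominated by the linear contraction; this is a standard but delicate Lyapunov--Perron bootstrap, with smallness of $w_t$ maintained inductively precisely because we are in the contracting case. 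A secondary point to verify is that an orbit which never meets $C$ cannot drift onto $\partial K(z_t)$, which follows from forward invariance of $K(z)$ under the linear part (a consequence of the exponential separation) together with the smallness of $R_n$ on $\mathcal V$.
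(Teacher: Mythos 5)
Your proposal is correct and takes essentially the same route as the paper, whose own ``proof'' simply defers to \cite[Lemma 4.1]{F-W-W2}: the dichotomy on whether the orbit difference $\Phi_t(y)-\Phi_t(x)$ ever enters $C$, strong monotonicity for branch $(b)$, and for branch $(a)$ the cone-exclusion inequality $\norm{P_{z_t}w_t}<C_1\norm{Q_{z_t}w_t}$ from Lemma \ref{L:ES-Vs-Cone}(ii) combined with the $F$-contraction of Lemma \ref{E-S-and-Lya-exponent}(ii) in a perturbative bootstrap. The non-uniform factor $e^{\eps t_1}$ you flag is indeed the delicate point and is resolved there exactly as you propose, by choosing $\eps$ small relative to $-\log\beta$ and shrinking the neighborhood so the H\"older remainders stay dominated by the linear contraction.
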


\begin{proof} By repeating all arguments in \cite[Lemma 4.1]{F-W-W2}, the conclusion in this lemma can be obtained for $\Phi_t$ strongly focusing monotone w.r.t. $C$.
\end{proof}

\begin{theorem}\label{omega-lambdak<=0}
Assume that there exists a regular point $z\in\omega(x)$ satisfying $\lambda_{kz}\leq 0$. Then either $x\in Q$, or $\omega(x)=\{z\}$ is
a singleton.
\end{theorem}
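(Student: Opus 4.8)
The plan is to argue by contradiction on the pseudo-orderedness: assuming $x\notin Q$, I will show that $z$ must be an equilibrium and $\omega(x)=\{z\}$. The organizing device is a \emph{transfer principle}: if two distinct points $p,q\in\omega(x)$ satisfy $p-q\in\tint C$ (or if one of them is an orbit point $\Phi_s(x)$ and the other lies in $\omega(x)$), then $x\in Q$. Indeed, writing $p=\lim_i\Phi_{a_i}(x)$ and $q=\lim_i\Phi_{b_i}(x)$ with $a_i,b_i\to\infty$, the difference $\Phi_{a_i}(x)-\Phi_{b_i}(x)$ converges to $p-q\in\tint C$; since $\tint C$ is open and $p\ne q$ forces $a_i\ne b_i$ for large $i$, we obtain two distinct, $C$-ordered points on $O^+(x)$, i.e. $x\in Q$. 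Thus, under the standing assumption $x\notin Q$, no such strongly ordered pair among orbit/limit points can occur.

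First apply Lemma \ref{oribit-lambdak<0} at the regular point $z$ (recall $\lambda_{kz}\le 0$) to obtain a neighborhood $\mathcal{V}$ of $z$. Choose $t_n\to\infty$ with $\Phi_{t_n}(x)\to z$, so $\Phi_{t_n}(x)\in\mathcal{V}$ for large $n$, and run the dichotomy of Lemma \ref{oribit-lambdak<0} on the pair $(z,\Phi_{t_n}(x))$. Alternative $(b)$ would give, for $t$ large, $\Phi_t(z)-\Phi_{t+t_n}(x)\in\tint C$ with $\Phi_t(z)\in\omega(x)$ and $\Phi_{t+t_n}(x)\in O^+(x)$; by the transfer principle this yields $x\in Q$, a contradiction. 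Hence alternative $(a)$ holds: $\norm{\Phi_t(z)-\Phi_{t+t_n}(x)}\to0$ as $t\to\infty$. Two forward orbits whose distance tends to $0$ share the same $\omega$-limit set, so $\omega(x)=\omega(\Phi_{t_n}(x))=\omega(z)$; in particular $z\in\omega(z)$, so $z$ is recurrent and there are $\sigma_m\to\infty$ with $\Phi_{\sigma_m}(z)\to z$.

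Next I feed the returns $\Phi_{\sigma_m}(z)\in\mathcal{V}$ into the same dichotomy relative to $z$. Alternative $(b)$ again produces, for large $t$, a strongly ordered pair $\Phi_t(z)-\Phi_{t+\sigma_m}(z)\in\tint C$ with both points in $\omega(z)=\omega(x)$, whence $x\in Q$ by the transfer principle — impossible. So alternative $(a)$ holds for every $m$: $\norm{\Phi_t(z)-\Phi_{t+\sigma_m}(z)}\to 0$ as $t\to\infty$. Evaluating along $t=\sigma_j\to\infty$ and using $\Phi_{\sigma_j}(z)\to z$ together with continuity of $\Phi_{\sigma_m}$ gives $\Phi_{\sigma_m}(z)=z$ for all $m$; hence $z$ is a periodic point (possibly an equilibrium). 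If $z$ were periodic with minimal period $p>0$, pick $\eps\in(0,p)$ small enough that $\Phi_\eps(z)\in\mathcal{V}$ and apply the dichotomy to $(z,\Phi_\eps(z))$: alternative $(a)$ is impossible because $t\mapsto\Phi_t(z)-\Phi_{t+\eps}(z)$ is $p$-periodic and not identically zero, so it cannot tend to $0$; thus alternative $(b)$ holds and supplies a strongly ordered pair of distinct points $\Phi_t(z),\Phi_{t+\eps}(z)$ on the periodic orbit contained in $\omega(x)$, forcing $x\in Q$ — again a contradiction. Therefore $z$ is an equilibrium and $\omega(x)=\omega(z)=\{z\}$.

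The main obstacle is this last step: once the orbit of $x$ asymptotically shadows the orbit of $z$ (alternative $(a)$ everywhere), one must rule out that $\omega(x)$ is a nontrivial periodic orbit through $z$. The recurrence of $z$ inside $\omega(x)$ is exploited first to collapse the returns to exact periodicity ($\Phi_{\sigma_m}(z)=z$), and then the strong positivity encoded in alternative $(b)$ — applied to a slightly time-shifted point on the orbit — is what breaks periodicity in favor of an ordered pair, closing the argument through the transfer principle.
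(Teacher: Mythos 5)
Your proof is correct and follows essentially the route the paper intends: the paper's own ``proof'' merely defers to \cite[Theorem 4.2]{F-W-W2}, whose argument likewise runs the dichotomy of Lemma \ref{oribit-lambdak<0} at the regular point $z$, rules out alternative $(b)$ by passing strongly ordered pairs to orbit points (your ``transfer principle''), deduces $\omega(x)=\omega(z)$ from alternative $(a)$, and then uses the recurrence of $z$ to collapse $\omega(z)$ to an equilibrium. Your write-up in effect supplies the details the paper omits, including the clean exclusion of a nontrivial periodic orbit via the $\eps$-shifted pair.
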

\begin{proof}The result is implied by repeating all arguments in \cite[Theorem 4.2]{F-W-W2}.
\end{proof}

\section{Proofs of Theorem A and B}

\indent Due to the local behaviors in the previous section, we can now describe the {\it generic dynamics} of the semiflow $\Phi_t$ strongly focusing monotone w.r.t. $C$ (see Theorem A) on a general Banach space, which concludes that generic (open and dense) positive semiorbits are either pseudo-ordered or convergent to equilibria. When $k=2$, together with the results in Lemma \ref{P-B thm}, we will further show the Poincar\'{e}-Bendixson Theorem (see Theorem B), that is to say, for generic (open and dense) points, its $\omega$-limit set containing no equilibria is a periodic orbit.

\vskip 3mm
{\it Proof of Theorem A.}
\begin{proof}
We note that $\mathcal{D}$  is $\omega$-compact because $\mathcal{O}^+(\mathcal{D})$ is precompact. Then, for any $x\in \mathcal{D}$, one of the following three alternatives holds:

(a) $\lambda_{kz}>0$ for any $z\in \omega(x)$;

(b) $\lambda_{k\tilde{z}}>0$ for any regular point $\tilde{z}\in\omega_0(x)$ and $\lambda_{kz}\leq 0$ for some irregular point $z\in\omega(x)\setminus \omega_0(x)$;

(c) $\lambda_{kz}\leq 0$ for some regular point $z\in\omega(x)$.

\noindent By virtue of Theorem \ref{omega-lambdak>0}, \ref{regular-lambdak>0} and \ref{omega-lambdak<=0}, one has $x\in \overline{Q}\cup C_E$ for any $x\in \mathcal{D}$. Thus, $Q\cup C_{E}$ is dense in $\mathcal{D}$.

To prove that $\text{Int} (Q\cup C_{E})$ is dense in $\mathcal{D}$ by contrary, we suppose that there is an open subset $U$ in $\mathcal{D}$ such that $U\cap \text{Int} (Q\cup C_{E})=\emptyset$. By strongly monotonicity of $\Phi_t$, $Q$ is open. Together with Theorem \ref{omega-lambdak>0} and \ref{regular-lambdak>0}, case (a) and (b) will not occur for any point in $U$. Then, only case (c) can occur for any point in $U$. Moreover, by virtue of Theorem \ref{omega-lambdak<=0}, one has that $U\subset C_E$. Recall that $U$ is open. Thus, $U\subset \text{Int} (Q\cup C_{E})$, a contradiction.

Therefore, we complete the proof.
\end{proof}
\vskip 3mm
{\it Proof of Theorem B.}
\begin{proof}  By Theorem A, ${\rm Int}(Q\cup C_E)$ is open and dense in $\mathcal{D}$. Now, given any $x\in {\rm Int}(Q\cup C_E)\cap\mathcal{D}$, if $\omega(x)\cap E=\emptyset$, then $x\in Q$. It then follows from Lemma \ref{P-B thm}, $\omega(x)$ is a periodic orbit. Therefore, we complete the proof.
\end{proof}

\begin{rem} {\rm In this paper, we introduce the strongly focusing monotonicity. The works on the generic dynamics of semiflows is the beginning of the series of research on the strongly focusing monotonicity, which extends our previous works (see \cite{F-W-W2}) on the generic behaviors of flows strongly monotone with high-rank cones to a class of general semiflows on an infinite dimensional Banach space. In the sequel, we will report the prevalence behaviors of this class of semiflows, the results on the whole-order property and Poincar\'{e}-Bendixson property of omega limit sets and the works on extending the strongly focusing monotonicity from semiflows to discrete systems, cocycles and skew-product semiflows.}
\end{rem}

\indent{\bf Acknowledgments}

I thank Prof. Yi Wang and Prof. Jianhong Wu for many useful suggestions.


\begin{thebibliography}{99}

\bibitem{F-W-W1} L. Feng, Y. Wang, J. Wu, Semiflows ``monotone with respect to high-rank cones" on a Banach space, SIAM J. Math. Anal. 49 (2017) 142-161.

\bibitem{F-W-W2} L. Feng, Y. Wang, J. Wu, Generic behavior of flows strongly monotone with respect to high-rank cones, J. Differ. Equ.
275 (2021) 858-881.

\bibitem{F-W-W3} L. Feng, Y. Wang, J. Wu, Generic convergence to periodic orbits for monotone flows with respect to 2-cones and applications to
SEIRS-modles, SIAM J. Appl. Math. 82 (5) (2022) 1829-1850.

\bibitem{F-W-W4} L. Feng, Y. Wang, J. Wu, Pseudo-ordered principle for smooth flows monotone with respect to higher rank cones, submitted.

\bibitem{FO1} G. Fusco, W. Oliva, A Perron theorem for the existence of invariant subspaces. Annali di Matematica Pura ed Applicata 160 (1991)
63-76.

\bibitem{Jo-Pa-Se} R. A. Johnson, K. J. Palmer, G.  R. Sell, Ergodic properties of linear dynamical systems, SIAM J. Math. Anal. 18 (1987) 1-33.

\bibitem{Hir1} M. W. Hirsch, Systems of differential equations which are competitive or cooperative. I: Limit sets, SIAM J. Math. Anal.
13 (1982), 167-179.

\bibitem{Hir2} M. W. Hirsch, Systems of differential equations that are competitive or cooperative II: Convergence almost everywhere,
SIAM J. Math. Anal. 16 (1985) 423-439.

\bibitem{Hir3} M. W. Hirsch, Systems of differential equations which are competitive or cooperative: III. Competing species, Nonlinearity
1 (1988) 51-71.

\bibitem{Hir33} M. W. Hirsch, Stability and convergence in strongly monotone dynamical systems, J. Reine Angew. Math. 383 (1988) 1-53.

\bibitem{Hir4} M. W. Hirsch, Systems of differential equations that are competitive or cooperative. IV: Structural stability in three-dimensional
systems, SIAM J. Math. Anal. 21 (1990) 1225-1234.

\bibitem{Hir5} M. W. Hirsch, Systems of differential equations that are competitive or cooperative. V. Convergence in 3-dimensional systems,
J. Differ. Equ. 80 (1989) 94-106.

\bibitem{Hir6} M. W. Hirsch, Systems of differential equations that are competitive or cooperative. VI. A local $C^r$ closing lemma for
3-dimensional systems, Ergod. Th. \& Dynam. Sys. 11 (1991) 443-454.

\bibitem{Kato} T. Kato, Pertubation Theory for Linear Operators, Springer, New York, 1980.

\bibitem{KLS} M. A. Krasnosel'skij, J. A. Lifshits, A. V. Sobolev, Positive Linear Systems, the Method of Positive Operators,
Heldermann Verlag, Berlin, 1989.

\bibitem{LL} Z. Lian, K. Lu, Lyapunov Exponents and Invariant Manifolds for Random Dynamical Systems on a Banach Space,
Mem. Amer. Math. Soc. 206 (967) (2010).

\bibitem{LW1} Z. Lian, Y. Wang, \newblock $K$-dimensional invariant cones of random dynamical system in $\mathbb{R}^n$ with applications,
J. Differ. Equ. 259 (2015) 2807-2832.

\bibitem{LW2} Z. Lian, Y. Wang, On linear random dynamical systems in a Banach space:
I. Multiplicative ergodic theorem and Krein-Rutmann type theorems, Adv. Math. 312 (2017) 374-424.

\bibitem{Mane} R. Ma\~{n}\'{e}, Lyapunov exponents and stable manifolds for compact transformations, in: Geometric Dynamics,
J. Palis (ed.), Lecture Notes in Mathematics vol. 1007, Springer-Verlag, 1983, 522-577.

\bibitem{Ose} V. I. Oseledets, A multiplicative ergodic theorem. Lyapunov characteristic numbers for dynamical systems, Trans. Mosc. Math. Soc. 19 (1968) 197-231.

\bibitem{San09} L. A. Sanchez, Cones of rank 2 and the Poincar\'{e}-Bendixson property for a new class of monotone systems,
J. Differ. Equ. 246 (2009) 1178-1190.

\bibitem{Smi95} H. Smith, Monotone Dynamical Systems, an Introduction to the Theory of Competitive and Cooperative Systems, Mathematical Surveys and Monographs, vol. 41, American Mathematical Society, Providence, RI, 1995.

\bibitem{Smi17} H. Smith, Monotone dynamical systems: reflections on new advances and applications, Discrete Contin. Dyn. Syst. 37 (1) (2017)
485-504.

\bibitem{Te1} I. Tere\v{s}\v{c}\'{a}k, Dynamical systems with discrete Lyapunov functionals, Ph.D. thesis, Comenius University, Bratislava, 1994.
\bibitem{Vi} M. Viana, Lectures on Lyapunov Exponents, Cambridge Studies in Advanced Mathematics, vol. 145, Cambridge University Press, 2014.
\end{thebibliography}
\end{document}